\newcommand{\vol}{\mathrm{Vol}}
\newcommand{\I}{\mathrm{i}}
\newcommand{\ep}{\epsilon}
\newcommand{\abs}[1]{| #1|}
\newtheorem{thm}{Theorem}[section]
\newtheorem{lmm}[thm]{Lemma}
\newcommand{\cc}{\mathbb{C}}
\newcommand{\ddt}{\frac{d}{dt}}
\newcommand{\ee}{\mathbb{E}}
\newcommand{\ma}{\mathcal{A}}
\newcommand{\pp}{\mathbb{P}}
\newcommand{\ra}{\rightarrow}
\newcommand{\rr}{\mathbb{R}}
\begin{document}

\title[Probabilistic methods for NLS]{Probabilistic methods for discrete nonlinear Schr\"odinger equations}
\author{Sourav Chatterjee}
\address{Courant Institute of Mathematical Sciences, New York University, 251 Mercer Street, New York, NY 10012}
\thanks{Sourav Chatterjee's research was partially supported by  NSF grant DMS-1005312   and a Sloan Research Fellowship}
\author{Kay Kirkpatrick}
\address{Courant Institute of Mathematical Sciences, New York University, and Centre De Recherche en MathŽmatiques de la D\'ecision, Universit\'e Paris IX Dauphine, Place du Mar\'echal de Lattre de TASSIGNY, F-75775 Paris Cedex 16 (France) }
\thanks{Kay Kirkpatrick's research was partially supported by NSF grant OISE-0730136}

\keywords{Nonlinear Schr\"odinger equation, invariant measure, discrete breather, exactly solvable model}

\begin{abstract}
We show that the thermodynamics of the focusing cubic discrete nonlinear Schr\"odinger equation are exactly solvable in dimensions three and higher. A number of explicit formulas are derived. 
\end{abstract}

\maketitle


\section{Introduction}
A complex-valued function $u$ of two variables $x$ and $t$, where $x\in \rr^d$ is the space variable and $t\in \rr$ is the time  variable, is said to satisfy a $d$-dimensional nonlinear Schr\"odinger equation (NLS) if 
\[
\I\partial_t u = - \Delta u + \kappa |u|^{p-1}u,
\]
where $\Delta$ is the Laplacian operator in $\rr^d$, $p$ is the nonlinearity parameter, and $\kappa$ is a parameter which is either $+1$ or $-1$. The case of interest in this article is $p=3$ and $\kappa = -1$, called the `focusing cubic NLS'. The focusing cubic NLS is an equation of interest in nonlinear optics, condensed matter physics and a number of other areas~\cite{ESY2, ESY,KSS,Z, BKA, FKM, W, rumpf04}. 

The questions of local and global well-posedness of nonlinear Schr\"odinger equations are still not completely understood. Local existence results under restrictive conditions on the initial data have been established in low dimensions \cite{GV, K}, while  ill-posedness results are known in higher dimensions \cite{CCT}. For a survey of the literature and further references, see the recent monograph of Rapha\"el~\cite{Raphael}.

One approach to this problem is via the method of invariant measures for the NLS flow, initiated by Lebowitz, Rose and Speer \cite{LRS} and developed by McKean and Vaninsky \cite{mckean94, mckean97a, mckean97b} and Bourgain \cite{B1, B2, B3, B4}. Invariant measures coupled with Bourgain's development of the so-called $X^{s,b}$ spaces (`Bourgain spaces') for constructing global solutions has led to important advances in this field. A striking recent development is the work of Tzvetkov \cite{T} who used invariant measures and Bourgain's method to construct global solutions of certain nonlinear Schr\"odinger equations for {\it random} initial data. The technique was further developed by  Burq and Tzvetkov \cite{BT1, BT2} and Oh et.\ al.\ \cite{colloh, ohsulem}.

Most of the above works use invariant measures as a tool for proving local or global well-posedness for various classes of initial data. However, the nature of the invariant measures themselves have not been so well studied. Studying the nature of the invariant measures may yield important information about the long-term behavior of these systems. The only results we know in this direction are those of Brydges and Slade \cite{BS} in $d=2$ and Rider \cite{Rider1, Rider2} in $d=1$. Some progress for invariant measures of the KdV equation has been made recently in \cite{oh10}. In this article we investigate the case of the discrete NLS in three and higher dimensions. Two problems that immediately arise are: (a) the construction of the invariant measure for the cubic NLS due to Lebowitz, Rose and Speer \cite{LRS} does not give a meaningful probability distribution when $d\ge 3$, and (b) local and global well-posedness are not well-understood in $d\ge 3$. To overcome these (very difficult) hurdles, we drastically simplify the situation by discretizing space and considering the so-called {\it discrete} NLS. Several  well-posedness results under general conditions on the initial data are known for the discrete system \cite{KS, W}, and the existence of the natural invariant  Gibbs measure is straightforward. 

In return for this simplification of the problem, we give a large amount of refined information about the nature of the invariant measure. In particular:
\begin{itemize}
\item We `solve' the system `exactly' in the sense of statistical mechanics by computing the limit of the log partition function. 
\item Analysis of the partition function yields a first-order phase transition; we identify the exact point of transition.
\item We prove the existence of so-called {\it localized modes} (also called discrete breathers \cite{FKM, mackayaubry94, W}) in functions drawn from the invariant measures and compute the size of these modes.
\item Additionally, we show that the localized mode persists at one site for an exponentially long time.
\end{itemize}
The results are stated in Section \ref{results} and the proofs are presented in later sections. The proofs involve elementary  probabilistic arguments. 

Incidentally, we do not know how to take our  results for the discrete NLS to some kind of a `continuum limit' as the grid size goes to zero. Neither do we know how to derive conclusions about the long-term behavior of solutions of the discrete NLS from our results about the nature of the invariant measures. As of now, these are open problems that may well be unsolvable.

\section{Set-up and statements of main results}\label{results}

It is of interest to study the nonlinear Schr\"odinger equation on graphs \cite{Adami, BCSTV}, and this is the setting of our results. 

Let $G = (V,E)$ be a finite undirected graph without self-loops. Let $D$ be the maximum degree of $G$ and let $n = |V|$ be the size of the graph. Let $h$ be a positive real number, denoting the `distance' between two neighboring vertices in $G$. When $G$ is a part of a lattice, $h$ is called the lattice spacing (e.g.,\ in \cite{W}). 

For example, when $G$ is a discrete approximation of the $d$-dimensional unit torus $[0,1]^d$ represented as $\{0, 1/L, 2/L,\ldots, (L-1)/L\}^d$, then $n = L^d$ and $h= 1/L$. In particular, $n\ra\infty$  as $L\ra\infty$; moreover, if $d\ge 3$, $nh^2$ also tends to $\infty$ as $L\ra\infty$. This  last condition will be crucial for us. There is nothing special about the torus. The condition $nh^2 \ra\infty$ should hold for any nice enough compact manifold of dimension $\ge 3$. The condition is also satisfied if instead of the torus we take a cube whose width is increasing to infinity as the grid size $1/L$ tends to zero (since $h=1/L$ and $n \gg L^d$ in this scenario), which can be viewed as a discrete approximation to the whole of~$\rr^d$. 

The condition $nh^2 \ra\infty$ is satisfied in 2D and 1D only if the domain that is approximated also tends to an infinite size. For example in 1D, if the grid size is $h = 1/L$ for some large $L$ (tending to infinity) then the condition $nh^2\ra\infty$ holds on an interval $[-K,K]$ only if $K$ grows to infinity with $L$ so fast that $K(L)/L\ra\infty$. 

The discrete nearest-neighbor Laplacian on $G$ is defined as
\[
\widetilde{\Delta} f_x := \frac{1}{h^2}\sum_{y\sim x} (f_y-f_x), 
\]  
where $y\sim x$ denotes the sum over all neighbors of $x$ and $f = (f_x)_{x\in V}$ is any map from $V$ into $\cc$. Note that the scaling by $h^2$ is meant to ensure, at least in the case of the $d$-dimensional torus, that the discrete Laplacian converges to the true Laplacian as the grid size goes to zero.

The discrete cubic NLS on $G$ with the discrete nearest-neighbor Laplacian $\widetilde{\Delta}$  is a family of coupled~ODEs with $f_x = f_x(t)$:
\begin{equation*}
 i \ddt  f_x  = - \widetilde{\Delta} f_x + \kappa \abs{f_x}^2 f_x, \ \ x\in V,
\end{equation*}
where $\kappa$ may be $+1$ or $-1$. These are known as the focusing and defocusing equations, respectively. The object of interest in this paper, as in Lebowitz-Rose-Speer \cite{LRS} and Bourgain \cite{B1}, is the focusing~NLS:
\begin{equation}\label{DNLSlam}
 i \ddt  f_x  = - \widetilde{\Delta} f_x - \abs{f_x}^2 f_x.
\end{equation}
The discrete Hamiltonian associated with the focusing NLS~\eqref{DNLSlam} is:
\begin{equation}\label{energy}
H(f) := \frac{2}{n} \sum_{(x,y)\in E} \biggl|\frac{f_x - f_y}{h}\biggr|^2 - \frac{1}{n}\sum_{x\in V} |f_x|^4.
\end{equation}
Up to scaling by a constant, this is simply the discrete analog of the continuous Hamiltonian considered in \cite{LRS, B1, mckean94}. 

Let  the power be $N(f) := \sum_{x\in V} |f_x|^2$; the mass is $n^{-1} N(f)$, but we will use these terms interchangeably.
Then we have conservation of mass under the dynamics, using standard finite-difference techniques \cite{Ladyzh}:
\[
 \ddt N(f) = 0,
 \]
as well as conservation of energy:
\[
\ddt H (f) = 0.
\]
Hence by the Liouville theorem, the measure $d\mu := e^{- \beta H(f) } \prod_{x \in V} d f_x$ is invariant under the dynamics of the discrete NLS \eqref{DNLSlam} for any real $\beta$. However, this measure has infinite mass if $\beta > 0$. The problem is easily solved by a mass cut-off as in \cite{LRS,B1} (allowed due to conservation of mass) and normalization. The resulting probability measure 
\begin{equation}\label{tmu}
  d\tilde{\mu} := Z^{-1} e^{- \beta H(f) }  1_{\{N(f) \le Bn\}} \prod_{x \in V} d f_x
\end{equation} 
continues to be invariant under the NLS dynamics. Here $B$ is an arbitrary positive cutoff, and $Z$ is the normalizing constant (partition function). Of course, both $\tilde{\mu}$ and $Z$ depend on the pair $(\beta, B)$. Let $\psi$ be a random element of $\cc^V$ with law $\tilde{\mu}$. That is, $\psi$ is a random function on $V$ such that for each $A \subseteq \cc^V$,
\[
\pp(\psi \in A) = Z^{-1} \int_{A} e^{-\beta H(f) } 1_{\{N(f) \le Bn\}} df, 
\]
where $df = \prod_x df_x$ denotes the Lebesgue differential element on $\cc^V$. Our objective is to understand the behavior of the random map $\psi$. The first step  is to understand the partition function $Z$. The first theorem below shows that if we have a sequence of graphs with $n$ and $nh^2$ both tending to infinity, the limit of  $n^{-1}\log Z$ can be exactly computed for any positive $\beta$ and $B$. 

The result can be roughly stated as follows. 
Let $m:[2,\infty) \ra\rr$ be the function
\begin{equation}\label{mdef}
m(\theta) := \frac{\theta}{2} - \frac{1}{2}+ \frac{\theta}{2} \sqrt{1-\frac{2}{\theta}} + \log\biggl(\frac{1}{2}-\frac{1}{2}\sqrt{1-\frac{2}{\theta}}\biggr). 
\end{equation}
It may be easily verified that $m$ is strictly increasing in $[2,\infty)$, $m(2) <0$ and $m(3) >0$. Thus, $m$ has a unique real zero that we call $\theta_c$. Numerically, $\theta_c \approx 2.455407$. Let\begin{equation}\label{fbeta}
F(\beta, B) := 
\begin{cases}
\log(B\pi e) & \text{ if } \beta B^2 \le \theta_c,\\
\log(B\pi e) + m(\beta B^2) &\text{ if } \beta B^2 > \theta_c.
\end{cases} 
\end{equation}
(Figure \ref{fig1} shows  a graph of $F$ versus $\beta$ when $B=1$.)
Theorem \ref{zthm} below asserts that if $\beta > 0$ and  we have a sequence of graphs such that $n \ra\infty$ and $n h^2 \ra \infty$ (as in the $d$-dimensional torus for $d\ge 3$), all other parameters remaining fixed, then 
\[
\lim_{n\ra\infty} \frac{\log Z}{n} = F(\beta, B). 
\]
The theorem also gives an explicit rate of convergence. 
\begin{figure}[t!]
      \centering
      \includegraphics[height=3in,clip, angle=270]{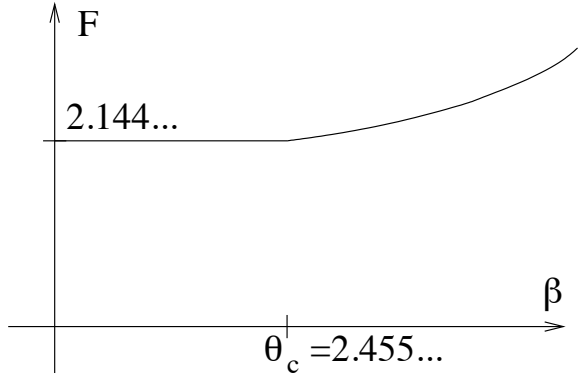} 
       \caption{The free energy is constant for small inverse temperature and starts increasing at the critical threshold. Here mass is normalized, $B=1$.}
       \label{fig1}
\end{figure}
\begin{thm}\label{zthm}
Suppose $\beta \ge 0$. Take any $\ep\in (0,1/5)$. There exists a positive constant $C$ depending only on $\ep$, $\beta$, $B$, $h$ and $D$ such that if $n > C$, then 
\[
\frac{\log Z}{n} - F(\beta, B) \ge -Cn^{-1/5}- C(nh^2)^{-1}
\]
and 
\[
\frac{\log Z}{n} - F(\beta, B)\le 
\begin{cases}
Cn^{-1/5+\ep} + C n^{-4\ep/5}  &\text{ if } \beta B^2 \le \theta_c,\\ 
Cn^{-1/5+\ep}  &\text{ if } \beta B^2 > \theta_c.
\end{cases}
\]
\end{thm}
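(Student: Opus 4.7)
\emph{Approach.} The plan is to reduce the partition function to a radial integral on a simplex, analyze that integral via Laplace's method for the upper bound, and match it with two explicit test configurations for the lower bound. Writing $f_x = \sqrt{r_x}\,e^{\I\phi_x}$ and integrating out the phases contributes a factor of $\pi^n$; rescaling by $s_x := r_x/B$ reduces the control of $Z$ to that of
\[
J_n(\theta) := \int_{s\ge 0,\, \sum_x s_x \le n}\exp\!\Bigl(\tfrac{\theta}{n}\sum_x s_x^2\Bigr)\,ds, \qquad \theta := \beta B^2.
\]
For the upper direction I will use $H(f) \ge -\tfrac{1}{n}\sum_x|f_x|^4$ (the gradient part of $H$ is nonnegative), which gives $Z \le (\pi B)^n J_n(\theta)$. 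For the lower direction, the mass constraint combined with $|f_x-f_y|^2 \le 2|f_x|^2+2|f_y|^2$ and the max-degree assumption yield a bound of $4DB/h^2$ on the gradient term, so $e^{-\beta H} \ge e^{-4\beta DB/h^2}\,e^{(\beta/n)\sum_x|f_x|^4}$; this will be the source of the $(nh^2)^{-1}$ term in the lower-bound error.

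\emph{Lower bound.} Two test regions suffice. The \emph{delocalized} region is the whole ball $\{\sum_x|f_x|^2 \le Bn\}$, whose volume $(\pi Bn)^n/n!$ gives via Stirling $n^{-1}\log Z \ge \log(\pi B e) - O((\log n)/n) - O((nh^2)^{-1})$. The \emph{localized} region fixes a single site $x_0$ and restricts to $|f_{x_0}|^2 \in [bn,bn+1]$ and $\sum_{x\ne x_0}|f_x|^2 \le (B-b)n$; its volume is $\pi\cdot(\pi(B-b)n)^{n-1}/(n-1)!$ while $(\beta/n)\sum_x|f_x|^4 \ge \beta b^2 n + O(1)$. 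Setting $u=b/B$ and optimizing $\log(1-u)+\theta u^2$ over $u\in[0,1]$ gives the maximizer $u_+(\theta)=\tfrac12(1+\sqrt{1-2/\theta})$ for $\theta>2$, with optimum value $m(\theta)$ (strictly positive precisely when $\theta>\theta_c$). Taking the larger of the two lower bounds matches $F(\beta,B)$ to within the stated error.

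\emph{Upper bound.} For $J_n(\theta)$ I would change variables to $N := \sum_x s_x$ and $t := s/N$ (uniform on the unit simplex),
\[
J_n(\theta) = \int_0^n \frac{N^{n-1}}{(n-1)!}\,\ee_t\!\bigl[e^{\theta N^2\sum_x t_x^2/n}\bigr]\,dN.
\]
Conditioning on $t^* := \max_x t_x$, whose density for $c>1/2$ is $n(n-1)(1-c)^{n-2}$, and using that on $\{t^*=c\}$ the residual simplex contributes $\sum_x t_x^2 = c^2+O(1/n)$ by symmetry, the inner expectation reduces (up to polynomial prefactors) to a Laplace integral
\[
\int_0^1 e^{n[\log(1-c)+\alpha c^2]}\,dc, \qquad \alpha := \theta N^2/n^2.
\]
Its saddle at $c=u_+(\alpha)$ (when $\alpha>2$) produces the exponent $m(\alpha)$; a second Laplace analysis in $N$ (peaked at $N=n$) together with $(N^{n-1}/(n-1)!)^{1/n}\to e$ yields $n^{-1}\log J_n\le 1+m^+(\theta)+o(1)$, and hence $n^{-1}\log Z \le F(\beta,B)+o(1)$.

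\emph{Main obstacle.} The hard part will be extracting the explicit rate $n^{-1/5+\ep}$ in the upper bound. The quantity $\ee[e^{\alpha\sum_x t_x^2}]$ on the simplex has no standard Cram\'er-type large deviation principle, since the MGF of $E^2$ diverges at every positive argument for iid exponentials $E$; one must therefore work intrinsically on the simplex, introducing a truncation threshold for $t^*$ at a fractional power of $n$ and tracking the error in both the exponent and the polynomial prefactor through both Laplace analyses. Balancing the contributions from below and above this threshold is what will produce the $n^{-1/5}$ exponent, and the additional $n^{-4\ep/5}$ correction in the subcritical case $\theta\le\theta_c$ arises from the extra effort needed to suppress a spurious localized critical point of $\log(1-c)+\alpha c^2$ uniformly across the integration in $N$.
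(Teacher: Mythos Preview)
Your lower bound is correct and in fact cleaner than the paper's: the two test regions (full ball and single-spike) give $L(0,B)$ and $L(b,B)$ directly with error $O((\log n)/n)+O((nh^2)^{-1})$, which is stronger than the $n^{-1/5}$ stated in Theorem~\ref{zthm}. The paper instead bounds the volume of a more structured set $\Gamma_{a,b}=\{S_4\approx a^2n^2,\ S_2\approx bn\}$ via a Gaussian change of measure; the resulting $n^{-1/5}$ loss is the price of that structure, which is paid because $\Gamma_{a,b}$ is reused in the proof of Theorem~\ref{phase}.

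The upper bound, however, has a genuine gap. The step ``on $\{t^*=c\}$ the residual simplex contributes $\sum_x t_x^2=c^2+O(1/n)$ by symmetry'' is false as written: it holds only in expectation, whereas you need it under $e^{\alpha n\sum_x t_x^2}$. Conditioning on the maximum alone does not control $\sum_x t_x^2$; nothing prevents the second-largest coordinate (or a small cluster of coordinates) from carrying a macroscopic share of the mass, and under the exponential those events dominate. Your Laplace integral $\int_0^1 e^{n[\log(1-c)+\alpha c^2]}\,dc$ therefore does not follow from the conditioning. This is not a matter of sharpening the rate; it is the substance of the upper bound, and your ``Main obstacle'' paragraph does not name it.

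What the paper does instead is parametrize directly by the quantities in the integrand, covering $\{S_2\le Bn\}$ by the slabs $\Gamma_{a,b}$ with $a=\sqrt{S_4}/n$ and $b=S_2/n$, and then proving the structural fact you are assuming: if $S_4\approx a^2n^2$ with $a$ not too small, then apart from two bad events $A_1$ (too many coordinates above a threshold) and $A_2$ (a sub-polynomial set of coordinates carrying mass $\ge an$), a single coordinate must lie in $[an(1-o(1)),an(1+o(1))]$ and all others are $o(n)$. The union bounds on $\pp(\phi\in A_1)$ and $\pp(\phi\in A_2)$ under the comparison Gaussian are where the exponent $4/5+\ep$ comes from, and the extra $n^{-4\ep/5}$ in the subcritical case is the cost of pushing the covering down to $a\sim n^{-2\ep/5}$. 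If you want to salvage your simplex approach, you will need an analogue of this single-peak lemma; parametrizing by $t^*$ presupposes exactly what has to be proved.
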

The behavior of the random map $\psi$ is the subject of the next theorem. It turns out that the behavior is quite different in the two regimes $\beta B^2 < \theta_c$ and $\beta B^2 > \theta_c$. 
To roughly describe this phase transition, let $M_1(\psi)$ and $M_2(\psi)$ denote the largest and second largest components of the vector $(|\psi_x|^2)_{x\in V}$. It turns out that when $\beta B^2 > \theta_c$, there is high probability that $M_1(\psi)\approx an$ and $M_2(\psi) = o(n)$, where 
\begin{equation}\label{adef}
a = a(\beta, B) := \frac{B}{2}+\frac{B}{2}\sqrt{1-\frac{2}{\beta B^2}}.
\end{equation}
In other words, when $\beta B^2 > \theta_c$, there is a single $x$ where $\psi_x$ takes an abnormally large value, and is relatively small at all other locations. Moreover, $N(\psi) \approx Bn$ with high probability.  A consequence is that the largest component carries more than half of the total mass: 
\[
\max_x\frac{|\psi_x|^2}{\sum_y |\psi_y|^2} \approx \frac{a}{B} >\frac{1}{2}. 
\]
On the other hand, when $\beta B^2 < \theta_c$, then $M_1(\psi) = o(n)$, but still $N(\psi)\approx Bn$. Consequently
\[
\max_x\frac{|\psi_x|^2}{\sum_y |\psi_y|^2} \approx 0.
\]
(Figure \ref{fig2} shows the graph of the fraction of mass $a$ at the heaviest site  versus $\beta$ when $B=1$.) 
\begin{figure}[t!]
      \centering
      \includegraphics[height=3in,clip, angle=270]{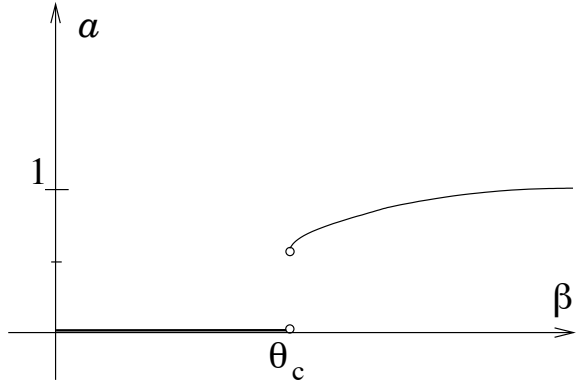} 
       \caption{The fraction of mass at the heaviest site jumps from roughly zero for small inverse temperature, to roughly $.71$ at the critical threshold. (Here $B=1$.)}
       \label{fig2}
\end{figure}
When $\beta B^2 > \theta_c$, the energy density $H(\psi)/n$ is strictly negative and approximately equals $-a^2$, whereas in the regime $\beta B^2 < \theta_c$, the energy density is close to zero. The formula for $a$ shows that $a$ does not tend to zero as $\beta B^2$ approaches $\theta_c$ from above (in fact, it stays bigger than $B/2$), demonstrating a first-order phase transition. These results are detailed in the following theorem. 
\begin{thm}\label{phase}
Suppose $h = n^{-p}$ for some $p \in (0, 1/2)$.  Let $a=a(\beta, B)$ be defined as in \eqref{adef} and let $M_1(\psi)$ and $M_2(\psi)$ be the largest and second largest components of $(|\psi_x|^2)_{x\in V}$.  First, suppose $\beta B^2 >\theta_c$. Take any $q$ such that $\max\{2p, 4/5\}<q < 1$. Then there is a constant $C$ depending only on  $\beta$, $B$, $D$, $p$ and $q$  such that if $n >C$, then with probability $\ge 1- e^{-n^q/C}$, 
\begin{equation}\label{phase1}
\begin{split}
&\biggl|\frac{H(\psi)}{n}+ a^2\biggr| \le Cn^{-(1-q)/4}, \ \ \ \biggl|\frac{N(\psi)}{n} - B\biggr| \le Cn^{-(1-q)/2}, \\
&\biggl|\frac{M_1(\psi)}{n} - a\biggr|\le Cn^{-(1-q)/4}, \ \text{ and} \ \ \frac{M_2(\psi)}{n}\le  Cn^{-(1-q)}. 
\end{split}
\end{equation}
Next, suppose $\beta B^2 < \theta_c$. Take any $q$ satisfying $\max\{(1+2p)/2, 17/18\}< q< 1$. Then there is a constant $C$ depending only on $\beta$, $B$, $D$, $p$ and $q$ such that whenever $n>C$, with probability $\ge 1- e^{-n^q/C}$ 
\begin{equation}\label{phase2}
\begin{split}
&\biggl|\frac{H(\psi)}{n}\biggr| \le 2n^{-2(1-q)}, \ \ 
\biggl|\frac{N(\psi)}{n} - B\biggr| \le n^{-(1-q)}, \\&\text{and } \ \frac{M_1(\psi)}{n} \le n^{-(1-q)}.
\end{split}
\end{equation}
Finally, if $\beta B^2=\theta_c$ and $q$ is any number satisfying $\max\{(1+2p)/2, 17/18\}< q< 1$, then there is a constant $C$ depending only on $\beta$, $B$, $D$, $p$ and $q$ such that whenever $n>C$, with probability $\ge 1- e^{-n^q/C}$ either \eqref{phase1} or \eqref{phase2} holds. 
\end{thm}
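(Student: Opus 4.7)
The strategy is to convert each concentration claim into a bound on a restricted partition function. For a Borel set $A\subseteq\cc^V$, write $Z_A := \int_A e^{-\beta H(f)}1_{\{N(f)\le Bn\}}df$, so that $\pp(\psi\in A) = Z_A/Z$. Since Theorem \ref{zthm} supplies $\log Z \ge nF(\beta,B) - O(n^{4/5})$, it suffices to prove $\log Z_A \le nF(\beta,B) - 2n^q/C$ for each "bad" event $A$, which we aim to do by rerunning the saddle-point analysis behind Theorem \ref{zthm} under the additional restriction $f\in A$.

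In the supercritical regime $\beta B^2 > \theta_c$, the picture is a single heavy site $x_*$ with $|f_{x_*}|^2 \approx an$ and a delocalized residual field on $V\setminus\{x_*\}$ of mass $\approx (B-a)n$. By permutation symmetry we may fix a vertex $x_0$, pay a factor $n$ to identify $x_* = x_0$, and change variables to $\rho := |f_{x_0}|^2/n$ and $\sigma := (N(f)-|f_{x_0}|^2)/n$. The gradient coupling between $f_{x_0}$ and its $\le D$ neighbors contributes at most $O(Dn^{2p}\rho)$ to $\beta H$, which is $o(n^q)$ precisely when $q>2p$ --- the source of that hypothesis. Modulo this coupling, Theorem \ref{zthm} applied to $V\setminus\{x_0\}$ (subcritical because $\beta\sigma^2\le\beta(B-a)^2<\theta_c$) gives the residual partition function as $e^{n\log(\sigma\pi e)+o(n)}$. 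The effective action $\rho\mapsto\beta\rho^2+\log\rho+\log(B-\rho)$, after using $\rho+\sigma=B$ (which follows from saturation of the subcritical mass cutoff), is strictly concave near its unique maximizer $\rho=a$ of \eqref{adef}. A quadratic expansion yields Gaussian concentration of $\rho$ with variance $\Theta(1/n)$, producing the exponents $(1-q)/4$ for $\rho$ and $(1-q)/2$ for $\sigma$ in \eqref{phase1}. The bound on $M_2/n$ follows by applying the subcritical argument to the residual graph, and energy concentration $H(\psi)/n\approx -a^2$ follows from $H/n=-\rho^2+O(n^{2p-1})$ combined with the $\rho$-bound.

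In the subcritical regime $\beta B^2<\theta_c$, Theorem \ref{zthm} yields $F(\beta,B)=\log(B\pi e)$, the logarithmic volume of the mass ball: the Hamiltonian contributes negligibly to typical configurations. Mass concentration $N(\psi)/n\approx B$ is a one-dimensional log-concavity statement, giving $\Theta(n^q)$ cost for deviations of size $n^{-(1-q)}$. For the max-component bound we union-bound over vertices: for each fixed $x_0$, $\pp(|\psi_{x_0}|^2\ge tn)$ is controlled by integrating out $f_{x_0}$ and invoking Theorem \ref{zthm} on the graph with $x_0$ deleted, yielding a rate $e^{-n\delta(t)/C}$ as long as the reduced system stays subcritical. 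Energy concentration then follows from $|H(f)/n|\le 4DBn^{2p-1}+BM_1(f)/n$. The critical case $\beta B^2=\theta_c$ is resolved by observing that both expansions remain valid at the threshold; every typical configuration satisfies one set of bounds or the other.

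The principal obstacle is the quantitative implementation of the "modulo $o(n)$" steps: the saddle-point expansion must control its third-derivative error uniformly over deviations of order $n^{-(1-q)/4}$ in $\rho$ and $n^{-(1-q)/2}$ in $\sigma$, and the $O(n^{2p})$ boundary coupling of $f_{x_0}$ to its neighbors must contribute only an $o(n^q)$ exponential correction. These two requirements are what dictate the stated ranges $q>\max\{2p,4/5\}$ and $q>\max\{(1+2p)/2,17/18\}$ in the two regimes (the $4/5$ and $17/18$ thresholds trace back to the error exponent in Theorem \ref{zthm} itself).
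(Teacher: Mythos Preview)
Your high-level strategy---bound bad events by bounding restricted partition functions against the lower bound on $Z$ from Theorem~\ref{zthm}---is exactly what the paper does. But the implementation diverges, and your supercritical argument has a real gap.

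The paper never conditions on the location of a heavy site and recurses on the residual. Instead it parametrizes configurations by the pair $\bigl(a(f),b(f)\bigr)=\bigl(\sqrt{S_4(f)}/n,\,S_2(f)/n\bigr)$, covers $\{b(f)\le B\}$ by the slabs $\Gamma_{a,b}$ together with the exceptional sets $V_{a_0}$ (small fourth moment) and $W_{c_0}$ (small gap $b-a$), and invokes Lemmas~\ref{zab}, \ref{wc}, \ref{va} to bound each piece by $e^{nL(a,b)+O(n^{4/5+\epsilon})}$. The single-mode structure ($M_1\approx an$, $M_2=o(n)$) is not obtained by a subcritical analysis of a residual graph; it is extracted \emph{inside} the proof of Lemma~\ref{zab} from the combinatorial inclusion $\Gamma_{a,b}\subseteq\Gamma_{a,b}'\cup A_1\cup A_2$, which says directly that a prescribed value of $S_4$ forces almost all of it onto one coordinate. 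In the subcritical regime the paper does not union-bound over a heavy vertex either: it uses $M_1(\psi)>n^{q}\Rightarrow S_4(\psi)>n^{2q}$ and then the same $\Gamma_{a,b}$ machinery.

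The gap in your supercritical sketch is circularity. You assert that after removing the putative mode $x_0$ the residual system with mass cutoff $\sigma n$ is subcritical ``because $\beta\sigma^2\le\beta(B-a)^2<\theta_c$,'' but that inequality presupposes $\rho\approx a$, which is precisely the conclusion. For generic $\rho$ the residual may well satisfy $\beta(B-\rho)^2>\theta_c$, in which case Theorem~\ref{zthm} on $V\setminus\{x_0\}$ produces a second mode, and you have not explained why such two-mode configurations are exponentially suppressed. (They are---because $\beta(\rho_1^2+\rho_2^2)+\log(B-\rho_1-\rho_2)<L(a^*,B)$ with a quantitative gap---but this needs its own argument, and is exactly what the paper's $S_4$-based route sidesteps.) Two smaller points: your effective action should be $\beta\rho^2+\log(B-\rho)+\mathrm{const}$; the Jacobian $df_{x_0}=\tfrac{n}{2}\,d\rho\,d\theta$ contributes no $\log\rho$. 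And the quadratic expansion around $\rho=a$ handles only a neighborhood of the maximizer; you still need a global bound showing $L(\rho,B)\le L(a,B)-c\,|\rho-a|$ once $|\rho-a|$ is of order one, which is where the paper's constants $a_0,c_0,\delta$ and the sets $V_{a_0},W_{c_0}$ enter.
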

An obvious shortcoming of Theorem \ref{phase} is that it does not give a precise description the critical case $\beta B^2 = \theta_c$. It is important to know whether~\eqref{phase1} or \eqref{phase2} is more likely, and how much. Another deficiency of both Theorem~\ref{zthm} and Theorem~\ref{phase} is that they say nothing about the thermodynamics of the 2D cubic NLS. A substantial amount of information is known about the 2D continuous system (see e.g.,\ \cite{Raphael} and references therein) but precise calculations along the lines of Theorems \ref{zthm} and \ref{phase} would be desirable. 

Additionally, it would be nice to be able to extend the theory to other nonlinearities than cubic.

Theorem \ref{phase} says that when $\beta B^2> \theta_c$, there is a single site $x\in V$ which bears a sizable fraction of the total mass of the random wavefunction $\psi$. This fraction is nearly deterministic, given by the ratio $a/B$. Theorem \ref{phase} also implies that this exceptional site bears nearly {\it all} of the energy of the system.  This is because the total energy $H(\psi)$ is approximately $-a^2 n$, while the energy at $x$ is, summing over just the neighbors $y$ of $x$: 
\begin{equation*}\begin{split}
\frac{1}{nh^2}\sum_{y\sim x} |f_x-f_y|^2 - \frac{|f_x|^4}{n} & \approx -n^{-1}M_1(\psi)^2 + O(h^{-2}) \\
& = -a^2 n + o(n),
\end{split}
\end{equation*}
the equality by Theorem \ref{phase}. Such a site is sometimes called a localized mode. 

It follows easily as a corollary of this theorem that typical discrete wavefunctions above the critical threshold have divergent discrete $H^1$ norm: 
$$\|f\|^2_{\widetilde{H}^1} :=\frac{1}{n}\sum_{x\in V} |f_x|^2 +  \frac{1}{n} \sum_{(x,y) \in E} \biggl| \frac{ f_x - f_y }{h}\biggr|^2.$$
However, it is not so clear that the discrete $H^1$ norm diverges even if $\beta B^2 \le \theta_c$. The following theorem shows that the the divergence happens on the discrete torus in dimensions $\ge 3$ for all values of $\beta$ and $B$. 
\begin{thm}\label{blowup}
Suppose the context of Theorem \ref{phase} holds, with $h= n^{-p}$ for some $p\in (0,1/2)$ and $n > C$. Let $\psi = (\psi_x)_{x\in V}$ be a discrete wavefunction picked randomly from the invariant probability measure $\tilde{\mu}$ defined in~\eqref{tmu}. If $\beta B^2 >\theta_c$, then there is a positive constant $c$ depending only on $\beta$, $B$, $D$ and $p$ such that $\pp(\| \psi \|_{\widetilde{H}^1} \le cn^{p})\le e^{-n^c}$ whenever $n \ge 1/c$. On the other hand, if $\beta B^2 \le \theta_c$, then the same result holds with a small modification: $\pp(\| \psi \|_{\widetilde{H}^1}\le c\sqrt{\delta} n^{p})\le e^{-\delta n^c}$, where $\delta$ is the average vertex degree. 
\end{thm}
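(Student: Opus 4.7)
The proof naturally splits into the two regimes distinguished by Theorem \ref{phase}, and these call for very different strategies.

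\textbf{Supercritical case ($\beta B^2 > \theta_c$).} Here the plan is to extract the $\widetilde H^1$-blow-up from a \emph{single} edge adjacent to the ``giant site'' identified in Theorem \ref{phase}. Let $x^*\in V$ be the site attaining $M_1(\psi)$. On the high-probability event where the conclusions of Theorem \ref{phase} hold, $|\psi_{x^*}|^2 \ge (a-o(1))n$ while $|\psi_y|^2 \le M_2(\psi) \le Cn^q$ for every $y\ne x^*$. Since $q<1$, the reverse triangle inequality gives $|\psi_{x^*}-\psi_y|\ge \tfrac{1}{2}|\psi_{x^*}|$ for $n$ large, hence $|\psi_{x^*}-\psi_y|^2 \ge an/8$ for any neighbor $y$ of $x^*$. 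Dropping all other edges in the sum defining the gradient norm yields
\[
\|\psi\|^2_{\widetilde H^1} \;\ge\; \frac{1}{nh^2}\,|\psi_{x^*}-\psi_y|^2 \;\ge\; \frac{a}{8}\,n^{2p},
\]
from which $\|\psi\|_{\widetilde H^1}\ge c n^p$ follows. The exceptional probability is inherited from Theorem \ref{phase}. The argument needs only that $x^*$ has at least one neighbor, which is automatic on the discrete torus.

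\textbf{Subcritical case ($\beta B^2 \le \theta_c$).} Here the mass is delocalized and no single edge can be responsible for the bound; instead the kinetic energy must be built up from contributions of order $1/h^2$ across all $|E|\sim \delta n/2$ edges. My plan is to use the decomposition $H(f)=2K(f)-Q(f)$ with $K(f):=\tfrac{1}{nh^2}\sum_{(x,y)\in E}|f_x-f_y|^2$ and $Q(f):=\tfrac{1}{n}\sum_x|f_x|^4$, together with the bound $Q(f)\le N(f)^2/n\le B^2 n$ valid on $\{N\le Bn\}$, to write the Markov-type estimate
\[
\pp(K(\psi)\le t) \;\le\; Z^{-1}\,e^{\beta B^2 n}\,\vol\bigl(\{K(f)\le t\}\cap\{N(f)\le Bn\}\bigr).
\]
One then inserts the lower bound $\log Z \ge nF(\beta,B)-o(n)$ coming from Theorem \ref{zthm} (with $F(\beta,B)=\log(B\pi e)$ in this regime), and bounds the remaining volume by Fourier-diagonalizing the discrete Laplacian: the constraint $K\le t$ becomes $\sum_k\lambda_k|\hat f_k|^2 \le tnh^2$, the constraint $N\le Bn$ becomes $\sum_k|\hat f_k|^2\le Bn$, and the volume of this ellipsoid-ball intersection can be computed (or upper-bounded) in terms of $\prod_{k\ne 0}\lambda_k$, whose asymptotic is known for the torus. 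Choosing $t = c\delta n^{2p}$ with $c$ sufficiently small, the volume estimate and the bound on $Z$ cancel the $e^{\beta B^2 n}$ factor and leave room for a surplus of size $\delta n^{c'}$ in the exponent, giving the claimed $e^{-\delta n^c}$ probability bound.

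\textbf{Main obstacle.} The supercritical argument is essentially a one-line consequence of Theorem \ref{phase}. The substantive work is in the subcritical case, where one must quantify how the mass cutoff $N(f)\le Bn$ --- which is a highly restrictive condition in dimension $\ge 3$ since the unconstrained Gaussian free field has $\ee\, N \sim nh^2\sum_{k\ne 0}\lambda_k^{-1}\gg n$ --- combines with the focusing weight $e^{\beta Q(f)}$ to concentrate the invariant measure on configurations whose Fourier spectrum is spread out enough that $K(\psi)$ is of order $n^{2p}$. The appearance of the average degree $\delta$ on both sides of the subcritical bound is explained by this ellipsoid-ball volume estimate: the number of ``effective'' independent edge degrees of freedom is $|E|\asymp\delta n$, which sets both the scale $\sqrt\delta\, n^p$ of the norm and the rate $\delta n^c$ of the tail bound.
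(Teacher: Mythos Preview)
Your supercritical argument is correct and is exactly what the paper does: one edge incident to the mode already forces $\|\psi\|_{\widetilde H^1}\gtrsim n^p$, with the exceptional probability coming straight from Theorem~\ref{phase}.

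The subcritical argument, however, has a real gap. Your Markov step
\[
\pp(K(\psi)\le t)\;\le\;Z^{-1}e^{\beta B^2 n}\,\vol\bigl(\{K\le t\}\cap\{N\le Bn\}\bigr)
\]
uses the crude bound $Q(f)\le B^2 n$ on the whole mass ball, which costs you an \emph{exponentially large} factor $e^{\beta B^2 n}$. To kill it you would need the volume of $\{K\le t\}\cap\{N\le Bn\}$ to be smaller than $(B\pi e)^n e^{-\beta B^2 n}$. A quick check on the $3$-torus (diagonalizing as you propose, optimizing the Laplace parameter) shows that the best exponential rate you can extract from the $K$-constraint is governed by $n^{-1}\sum_k\log(1+\alpha B\lambda_k)$, and for $\beta B^2$ near $\theta_c\approx 2.46$ this does not dominate $\beta B^2$. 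So the inequality simply fails in the regime the theorem is supposed to cover; your claim that ``the volume estimate and the bound on $Z$ cancel the $e^{\beta B^2 n}$ factor'' is not justified. Your Fourier plan is also specific to the torus, whereas the statement is for general bounded-degree graphs.

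The missing idea is to use Theorem~\ref{phase} a second time, not just Theorem~\ref{zthm}. In the subcritical phase Theorem~\ref{phase} gives $|H(\psi)|\le 2n^{2q-1}$ with probability $\ge 1-e^{-n^q/C}$; restricting to this event replaces your $e^{\beta B^2 n}$ by a harmless $e^{Cn^{2q-1}}$. After that, the paper does not touch the Laplacian spectrum at all: it bounds $(B\pi e)^{-n}\int_U 1_{\{N\le Bn\}}\,df\le\pp(\sqrt{B}\,\phi\in U)$ with $\phi$ a vector of \emph{i.i.d.} standard complex Gaussians, picks a matching $E'\subseteq E$ with $|E'|\ge |E|/D$ so that the increments $\phi_x-\phi_y$ over $E'$ are independent, and uses $\ee\,e^{-|\phi_x-\phi_y|^2}=1/3$ to get $\pp\bigl(\sum_{(x,y)\in E}|\phi_x-\phi_y|^2\le |E|/C\bigr)\le e^{-|E|/C}$. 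Since $|E|=\delta n/2$, this produces both the scale $\sqrt{\delta}\,n^p$ and the rate $\delta n^c$ without any spectral computation and for arbitrary graphs. Your GFF heuristic (``the unconstrained Gaussian free field has $\ee N\gg n$'') is a red herring here: the reference measure after the $H$-conditioning is essentially uniform on the ball, which is dominated by an i.i.d.\ Gaussian, not a free field.
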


Since the measure $\tilde{\mu}$ of \eqref{tmu} is invariant for the discrete NLS \eqref{DNLSlam}, one may expect from the above discussion that if the initial data comes from $\tilde{\mu}$, localized modes will continue to exist as time progresses. The question is whether the same site continues to be a mode for a long time (in which case we have a `standing' or `stationary' wave with a localized mode, sometimes called a discrete breather), or not. The following theorem shows that indeed, the same site continues to be the localized mode for an exponentially long period of time. This is an example of a dynamical result deduced from a theorem about the statistical equilibrium. Of course, we need to use the NLS equation \eqref{DNLSlam} for some basic dynamical  information at one point in the proof. (As a side note, let us mention that global-in-time solutions of \eqref{DNLSlam} are known to exist \cite{W,KS}.)
\begin{thm}\label{expo}
Suppose $h = n^{-p}$ for some $p\in (0,1/2)$ and $\beta B^2 >\theta_c$. Let $a$ be defined as in \eqref{adef}. Let $\psi(t) = (\psi_x(t))_{x\in V}$ be a discrete wavefunction evolving according to \eqref{DNLSlam}, where the initial data $\psi(0)$ is picked randomly from the invariant probability measure $\tilde{\mu}$ defined in~\eqref{tmu}. Choose any $q$ such that $\max\{2p, 4/5\} < q< 1$. Then there is a constant $C$ depending only on $\beta$, $B$, $D$, $p$ and $q$ such that if $n > C$, then with probability $\ge 1-e^{-n^q/C}$ the inequalities \eqref{phase1} hold for $\psi(t)$ for all $0 \le t \le e^{n^q/C}$, and moreover there is a single $x\in V$ such that the maximum of $|\psi_y(t)|$ is attained at $y=x$ for all~$0\le t\le e^{n^q/C}$. In particular, $\psi(t)$ is approximately  a standing wave with localized mode at $x$ for an exponentially long time. 
\end{thm}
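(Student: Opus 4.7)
The plan is to combine the invariance of $\tilde\mu$ under the NLS flow with a crude pointwise bound on $\partial_t |\psi_x|^2$. I would discretize the time interval into a superpolynomial number of grid points $t_i = i\Delta$, apply Theorem~\ref{phase} at each $t_i$ (valid because, by invariance, $\psi(t_i)$ is again distributed as $\tilde\mu$), and take a union bound so that \eqref{phase1} holds at every $t_i$ with probability $1-e^{-n^q/C}$. The derivative estimate will then extend the conclusion, and the identity of the heavy site, to every intermediate $t$.

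For the derivative estimate I would differentiate $|\psi_x|^2$ and use \eqref{DNLSlam} to obtain $\partial_t|\psi_x|^2 = -2\,\mathrm{Im}(\bar\psi_x\,\widetilde{\Delta}\psi_x)$. Conservation of mass and the cutoff give $N(\psi(t))\le Bn$ for all $t$, whence $\max_y|\psi_y(t)|\le \sqrt{Bn}$ and $|\widetilde{\Delta}\psi_x|\le 2Dh^{-2}\sqrt{Bn}$, yielding $|\partial_t|\psi_x|^2|\le 4DB\,n^{1+2p}$ uniformly in $x$ and $t$. I would then fix a small constant $c_0>0$, set $\Delta := c_0\,n^{-1-2p+q}$ and $N := \lceil e^{n^q/C'}\rceil$ with $C'$ chosen larger than the constant appearing in Theorem~\ref{phase}. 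Letting $\mathcal{E}_i$ be the event that \eqref{phase1} holds at $\psi(t_i)$, a union bound gives $\pp\bigl(\bigcap_{i=0}^{N}\mathcal{E}_i\bigr) \ge 1-e^{-n^q/C}$ for some $C>0$ and $n$ large.

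On this good event, conservation of $H$ and $N$ makes the first two inequalities of \eqref{phase1} automatic for every $t$. For the remaining two, each $\mathcal{E}_i$ picks out a unique heaviest site $y_i\in V$ with $|\psi_{y_i}(t_i)|^2\ge (a-O(n^{-(1-q)/4}))n$ and $|\psi_y(t_i)|^2\le Cn^q$ for $y\ne y_i$. The derivative bound supplies $\bigl||\psi_y(t)|^2-|\psi_y(t_i)|^2\bigr|\le 4DBc_0\,n^q$ for every $y$ and every $t\in[t_i,t_{i+1}]$. If $y_{i+1}\ne y_i$, this one-step variation applied to $y_{i+1}$ would need to grow from $O(n^q)$ at $t_i$ to order $an$ at $t_{i+1}$, which is impossible for $c_0$ small and $n$ large; hence $y_i\equiv y_*$ is common to all $i$. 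The same variation divided by $n$ shows that $M_1(\psi(t))/n$ and $M_2(\psi(t))/n$ change by at most $4DBc_0\,n^{-(1-q)}$ across each subinterval, well within the tolerances $n^{-(1-q)/4}$ and $n^{-(1-q)}$ in \eqref{phase1} after a constant enlargement. Hence \eqref{phase1} persists for every $t\in[0,N\Delta]$, with $y_*$ always the unique heaviest site.

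The total time $N\Delta$ exceeds $e^{n^q/C'}$ times a polynomial in $n$, so $N\Delta \ge e^{n^q/C}$ for a slightly enlarged $C$ once $n$ is large. The main technical point is the calibration of $\Delta$: it must be small enough that $4DBc_0\,n^{1+2p}\Delta \ll an$ (to secure both uniqueness of the localized mode and preservation of the $M_1,M_2$ bounds in \eqref{phase1}) yet large enough that $N\Delta$ remains exponentially long in $n^q$. The hypothesis $q>2p$ from Theorem~\ref{phase} is precisely what reconciles these two demands, and no new ingredient beyond invariance plus the elementary ODE estimate seems necessary; the rest is careful bookkeeping of polynomial factors against the target probability $e^{-n^q/C}$.
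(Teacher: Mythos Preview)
Your argument is correct and rests on the same two pillars as the paper's proof: invariance of $\tilde\mu$ under the flow (so Theorem~\ref{phase} applies at every fixed time) and a polynomial-in-$n$ bound on $\partial_t|\psi_x|^2$ coming from the equation. The technical execution, however, is different. You discretize time with mesh $\Delta\asymp n^{q-1-2p}$, apply Theorem~\ref{phase} at each of the $N\asymp e^{n^q/C'}$ grid points, and take a union bound; the derivative estimate then interpolates between grid points. The paper instead works in continuous time: it lets $T\subseteq[0,e^{n^q/2C_0}]$ be the set of ``bad'' times where \eqref{phase1} fails, bounds $\ee|T|$ by Fubini and invariance, and uses Markov's inequality to conclude $|T|$ is exponentially small with high probability. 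The mode is then pinned by a first-passage argument: if the mode $x$ ever dropped to an intermediate value, continuity would force an interval of transitional values entirely inside $T$, but $|T|\le e^{-n^q/4C_0}$ while the derivative is only polynomially large, a contradiction.

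Your route is more elementary and your derivative bound $|\partial_t|\psi_x|^2|\le 4DBn^{1+2p}$ is sharper than the paper's $n^C$ (you exploit that the nonlinear term drops out of $\partial_t|\psi_x|^2$). The paper's Lebesgue-measure argument is a bit slicker in that it avoids calibrating $\Delta$ and sidesteps the explicit union bound, but both approaches give the same conclusion with the same dependence on parameters. One minor wording slip: $N\Delta$ is $e^{n^q/C'}$ \emph{divided} by a polynomial, not multiplied, but as you note this is absorbed into a larger $C$.
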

The above theorem proves, in particular, the existence and typicality of solutions of \eqref{DNLSlam} that have  unique stable localized modes for exponentially long times if the initial energy or mass are above a threshold. One key difference between this theorem and earlier results about existence of discrete breathers (e.g.,\ \cite{W}) is that the earlier results could prove the existence of localized modes only if the mass was very large, i.e.\ tending to infinity, while Theorem \ref{expo} proves it under finite mass and energy. Another difference is that it shows the typicality, rather than mere existence, of a breather solution. 

Our final theorem investigates the probability  distribution of the individual coordinates of a random map $\psi$ picked from the measure $\tilde{\mu}_{\beta, B}$. It turns out that it's possible to give a rather precise description of the distribution for small collections of coordinates. If $\beta B^2<\theta_c$, then for any $x_1,\ldots, x_k\in V$, under a certain symmetry assumption on $G$, the joint distribution of $B^{-1/2}(\psi_{x_1},\ldots, \psi_{x_k})$ is approximately that of a standard complex Gaussian vector, provided $k$ is sufficiently small compared to $n$. When $\beta B^2 > \theta_c$, the same result holds, but for the vector $(B-a)^{1/2}(\psi_{x_1},\ldots, \psi_{x_k})$ where $a$ is defined in \eqref{adef}. 

The symmetry assumption on $G$ is as follows. Assume that there exists a group  $\Sigma$ of automorphisms of $G$ such that:
\begin{enumerate}
\item[1.] $|\Sigma|=n$.
\item[2.] No element of $\Sigma$ except the identity has any fixed point.
\end{enumerate}
When these conditions hold, we say that $G$ is {\it translatable} by the group~$\Sigma$. For example, the discrete torus is translatable by the group of  translations. Note that a translatable graph is necessarily  transitive. 
\begin{thm}\label{distribution}
Suppose the graph $G$ is translatable by some group of automorphisms, according to the above definition. Suppose $h= n^{-p}$ for some $p\in (0,1/2)$, and let $\psi$ be a random wavefunction picked according to the measure $\tilde{\mu}$. Take any $k$ distinct points $x_1,\ldots, x_k\in V$. Let $\phi = (\phi_1,\ldots, \phi_k)$ be a vector of i.i.d.\ standard complex Gaussian random variables. If $\beta B^2 < \theta_c$, then there is a constant $C>0$ depending only on $\beta$, $B$, $D$ and $p$ such that if $n > C$, then for all Borel sets $U\subseteq \cc^k$,
\[
\bigl|\pp(B^{-1/2}(\psi_{x_1},\ldots,\psi_{x_k}) \in U) - \pp(\phi\in U)\bigr|\le kn^{-1/C}. 
\]
If $\beta B^2 > \theta_c$, the result holds after $B^{-1/2}$ is replaced with $(B-a)^{-1/2}$ where $a = a(\beta, B)$ is defined in \eqref{adef}, and the error bound is changed to $k^3 n^{-1/C}$. 
\end{thm}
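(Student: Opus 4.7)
The strategy is to compare the measure $\tilde\mu$ to a suitable uniform measure on a sphere and transfer the Gaussian-marginal property, exploiting the concentration results of Theorem~\ref{phase} together with the $\Sigma$-equivariance provided by translatability. In both regimes, Theorem~\ref{phase} pins down the radial behaviour of $\psi$: the mass $N(\psi)$ concentrates near $Bn$ in the subcritical regime, whereas in the supercritical regime the mass splits cleanly into $|\psi_{X^*}|^2\approx an$ at a single site $X^*$ and $N(\psi)-|\psi_{X^*}|^2\approx (B-a)n$ over the remaining sites. This reduces the theorem to understanding the conditional distribution of $\psi$ on a sphere of complex dimension $n$ or $n-1$, for which one can invoke the classical Poincar\'e--Borel fact that, for $k\ll n$, the joint law of $k$ fixed coordinates of a uniform sample from the sphere of radius $r$ in $\cc^n$ is within total variation $O(k/n)$ of that of $k$ i.i.d.\ complex Gaussians of variance $r^2/n$.

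For the subcritical case, I would disintegrate $\tilde\mu$ along the radial variable $R=\sqrt{N(\psi)}$ and then show that the conditional measure on the sphere $\{N(f)=R^2\}$, with $R^2\approx Bn$, is close in $k$-marginal to the uniform measure $\mu_{\mathrm{sph}}$ on that sphere. The conditional marginal density of $(\psi_{x_1},\ldots,\psi_{x_k})$ is obtained by integrating the weight $e^{-\beta H(f)}$ over the remaining coordinates subject to the sphere constraint; translatability and the bound $M_1(\psi)/n=o(1)$ from Theorem~\ref{phase} force the dependence of this integral on $(f_{x_1},\ldots,f_{x_k})$ to be negligible. The Poincar\'e--Borel calculation then says that the $k$-marginal of $\mu_{\mathrm{sph}}$ is within total variation $O(k/n)$ of $k$ i.i.d.\ complex Gaussians of variance $R^2/n\approx B$, which after rescaling by $B^{-1/2}$ yields the claimed $kn^{-1/C}$ bound.

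For the supercritical case, I would condition on the identity $X^*\in V$ of the localized mode (the vertex attaining $M_1(\psi)$). Translatability together with $\Sigma$-invariance of $\tilde\mu$ makes $X^*$ uniformly distributed on $V$, so $\pp(X^*\in\{x_1,\ldots,x_k\})\le k/n$. On the complementary event, Theorem~\ref{phase} shows that $|\psi_{X^*}|^2\approx an$ is essentially deterministic, while the remaining $n-1$ coordinates carry total mass $\approx (B-a)n$, have negligible quartic energy per site, and therefore sit in exactly the subcritical configuration on a graph of size $n-1$ with parameter $B-a$. Applying the subcritical argument to this conditional law gives Gaussian marginals of variance $B-a$; the $k^3$ factor in the final error is produced by a union bound over the $k$ possible coincidences $X^*=x_i$, together with a correction for the removal of one vertex from the edge sums and an additional error in the comparison coming from conditioning on the position and magnitude of the mode.

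The main obstacle will be the quantitative step in the subcritical case: rigorously bounding how much the weight $e^{-\beta H(f)}$ can skew the $k$-dimensional marginal away from the uniform-sphere marginal. A priori $H(f)$ is of order $n^{2p}$ under the uniform sphere measure (each of the $\Theta(n)$ edges contributes $\Theta(1/h^2)$ on average), so one cannot naively argue that $e^{-\beta H(f)}$ is a bounded density. Instead one needs a Lipschitz-type control showing that $H(f)-H(f')$ is small whenever $f$ and $f'$ differ only on the $k$ coordinates $x_1,\ldots,x_k$; this follows from locality of the edge interactions, but converting it into a total-variation bound of the stated form $kn^{-1/C}$ requires combining it with sharp concentration of both the mass and the quartic energy on the complementary $n-k$ coordinates, which is the most delicate part of the argument.
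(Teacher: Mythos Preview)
Your approach is genuinely different from the paper's, and the difficulty you flag at the end is real and not resolved by the outline you give.

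The paper never compares $\tilde\mu$ to a uniform sphere measure and never invokes Poincar\'e--Borel. Instead it exploits translatability in a much stronger way. One first builds, from the group $\Sigma$, an \emph{independent} subset $\Sigma'$ of size $\ge n/k^2$ with the property that the $k$-tuples $(\sigma x_1,\ldots,\sigma x_k)$ are pairwise disjoint as $\sigma$ ranges over $\Sigma'$. For any $U\subseteq\cc^k$ one then sets
\[
Q(f):=\frac{1}{|\Sigma'|}\sum_{\sigma\in\Sigma'} 1\{(f_{\sigma x_1},\ldots,f_{\sigma x_k})\in U\}.
\]
Under an i.i.d.\ standard complex Gaussian field $\phi$ the summands are \emph{independent}, so Hoeffding's inequality gives $\pp(|Q(\phi)-\ee Q(\phi)|>\ep)\le 2e^{-n\ep^2/2k^2}$, with $\ee Q(\phi)=\pp((\phi_{x_1},\ldots,\phi_{x_k})\in U)$. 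The comparison with $\psi$ is then a change of measure, not a sphere argument: on the high-probability set from Theorem~\ref{phase} one has $|H|=o(n)$, hence $e^{-\beta H}$ is bounded by $e^{o(n)}$, and the lower bound on $Z$ from Theorem~\ref{zthm} shows that the Gibbs density relative to the product Gaussian is at most $e^{o(n)}$ there. This sub-exponential factor is swallowed by the $e^{-n\ep^2/2k^2}$ from Hoeffding, so $Q(B^{-1/2}\psi)$ also concentrates near $\ee Q(\phi)$. Finally, $\Sigma$-invariance of $\tilde\mu$ gives $\ee Q(B^{-1/2}\psi)=\pp(B^{-1/2}(\psi_{x_1},\ldots,\psi_{x_k})\in U)$, and Jensen finishes. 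In the supercritical case one decomposes over the mode location and applies a linear change of variables $f_x\mapsto n^{-1/2}f_x$ at the mode before repeating the argument; the extra $k^3$ comes from the loss $|\Sigma'|\ge n/k^2$ combined with the $k$ automorphisms that touch the mode.

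By contrast, your plan needs the $k$-marginal of the Gibbs measure \emph{conditioned on the sphere} to be close to the uniform-sphere $k$-marginal. The Lipschitz estimate you propose, $|H(f)-H(f')|$ small when $f,f'$ differ only at $x_1,\ldots,x_k$, controls the numerator of the marginal density but not its dependence on the \emph{radius} of the complementary sphere $\{|v|^2=R^2-|u|^2\}$: one must show that $r\mapsto \int_{|v|=r} e^{-\beta H_3(v)}\,d\sigma(v)$ varies slowly on the scale $|u|^2=O(k)$, and since $\log$ of this integral is of order $n$ this is not automatic. The paper's self-averaging device sidesteps exactly this point by never conditioning on the sphere at all and comparing directly to a product Gaussian. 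Your use of translatability only to make $X^*$ uniform also throws away the key structural input; the paper uses it to manufacture the independence that drives Hoeffding.
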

Obviously, it would be nice to have a similar result for $\beta B^2 = \theta_c$, but our current methods do not yield such a result.

When $\beta B^2>\theta_c$, the reader may wonder how can $\psi_x$ behave like a complex Gaussian variable with second moment $B-a$, when Theorem \ref{phase} and symmetry among the coordinates seem to imply $\ee|\psi_x|^2 \approx B$. It is exactly the peaked nature of the field in the case $\beta B^2 > \theta_c$ which allows for a convergence in law without that of the second moment. 

The rest of the paper is devoted to the proofs of the theorems of this section. Some preliminary lemmas are proved in Section \ref{prelim}. Theorem \ref{zthm} is proved in Section \ref{zthmproof}, Theorem \ref{phase} in Section \ref{phaseproof}, Theorem \ref{blowup} in Section \ref{blowupproof}, Theorem \ref{expo} in Section~\ref{expoproof}, and Theorem \ref{distribution} in Section \ref{distproof}. 

\section{Preliminary lemmas}\label{prelim}
Throughout the rest of this article, $C$ will denote any positive function of  $(\ep, \beta, B, h,D)$, whose explicit form is suppressed for the sake of brevity. 
The value of $C$ may change from line to line, and usually $C$ will be called a `constant' instead of a function. When $h= n^{-p}$, $C$ will depend on $p$ instead of $h$. 

Some further conventions: Sums without delimiters will stand for sums over all $x\in V$. For each $f\in \cc^V$ and each $k\ge 2$, let $S_k(f) := \sum |f_x|^k$. In particular, $N(f)=S_2(f)$. 
Define
\[
Z' := \int_{\cc^V} e^{\beta n^{-1}S_4(f)} 1_{\{S_2(f)\le Bn\}} df
\]
If $S_2(f) \le Bn$, then 
\[
0 \le \sum_{(x,y)\in E} |f_x - f_y|^2 \le \sum_{(x,y)\in E} (2|f_x|^2 + 2|f_y|^2) \le 4B D n. 
\]
Thus, we have the important bounds
\begin{equation}\label{zz}
e^{-Ch^{-2}}Z' \le Z\le Z'
\end{equation}
which precipitate the irrelevance of the kinetic term in the Hamiltonian (although this is not obvious {\it a priori}). 
For each $0 < a < b$, define
\begin{align*}
\Gamma_{a,b} &:= \Big\{f\in \cc^V: a^2 n^2 (1-n^{-1/5}) \le S_4(f)\le a^2 n^2 (1+n^{-1/5}), \\
&\qquad \qquad b n (1-n^{-1/5}) \le S_2(f)\le b n(1+n^{-1/5})\Big\}. 
\end{align*}
Define the function
\begin{equation}\label{lab}
L(a,b) := \beta a^2 + \log (b-a) + \log \pi + 1.
\end{equation}
It will be shown in Lemma \ref{ftheta} that in fact, $F(\beta, B) = \sup_{0\le a<b\le B} L(a,b)$. 
\begin{lmm}\label{zlow}
For any $0<a< b\le B/(1+n^{-1/5})$, 
\begin{equation*}
Z \ge a\exp\biggl(nL(a,b) -\frac{Cn^{4/5}}{b-a}-\frac{C}{h^2} \biggr). 
\end{equation*}
\end{lmm}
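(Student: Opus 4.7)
The plan is to combine the bound $Z \ge e^{-C/h^2} Z'$ from \eqref{zz} with a direct lower bound on $Z'$ obtained by restricting the defining integral to $\Gamma_{a,b}$. On $\Gamma_{a,b}$ the integrand satisfies $e^{\beta S_4(f)/n} \ge e^{\beta a^2 n(1-n^{-1/5})}$, while the cutoff $S_2(f) \le Bn$ is automatic since by hypothesis $b(1+n^{-1/5}) \le B$. Hence
\[
Z' \ge e^{\beta a^2 n (1-n^{-1/5})} \vol(\Gamma_{a,b}),
\]
and the task reduces to producing a volume lower bound of the form $\vol(\Gamma_{a,b}) \ge \pi a \cdot (\pi(b-a))^{n-1} e^{n + \mathrm{error}}$, which when combined with the exponential prefactor above assembles into $a\exp(nL(a,b))$ modulo errors.

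To produce the volume bound I would fix a single vertex $x_0 \in V$ to play the role of the heavy site and confine $|f_{x_0}|^2$ to the thin annulus $[an,an+a]$, of Lebesgue area exactly $\pi a$. For the remaining $n-1$ coordinates I introduce the product complex-Gaussian measure
\[
d\nu := \prod_{x\ne x_0}\frac{1}{\pi(b-a)} e^{-|f_x|^2/(b-a)} df_x,
\]
so that Lebesgue measure is recovered via $df = (\pi(b-a))^{n-1} e^{S'_2(f)/(b-a)} d\nu$ with $S'_2 := \sum_{x\ne x_0}|f_x|^2$. For $|f_{x_0}|^2 \in [an, an+a]$ the constraint $S_2(f) \ge bn(1-n^{-1/5})$ of $\Gamma_{a,b}$ forces $S'_2 \ge (b-a)n - bn^{4/5} - a$ on the slice, so
\[
\vol(\Gamma_{a,b}) \ge \pi a\cdot (\pi(b-a))^{n-1} \exp\!\Bigl(n - \tfrac{bn^{4/5}}{b-a} - \tfrac{a}{b-a}\Bigr)\inf_{f_{x_0}}\nu(A_{f_{x_0}}),
\]
where $A_{f_{x_0}}$ is the set of light-site configurations for which the full $f$ really belongs to $\Gamma_{a,b}$.

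It remains to verify $\nu(A_{f_{x_0}}) \ge 1/2$ uniformly in the heavy coordinate, and this is straightforward concentration. Under $\nu$ the squared magnitudes $|f_x|^2$ are i.i.d.\ exponential with mean $b-a$, so $S'_2$ has mean $(b-a)(n-1)$ and variance $(b-a)^2(n-1)$, while $S'_4 := \sum_{x\ne x_0}|f_x|^4$ has mean $2(b-a)^2(n-1)$ and variance of order $(b-a)^4 n$. Chebyshev yields $|S'_2 - (b-a)(n-1)| \le bn^{4/5}/2$ and $S'_4 \le a^2 n^{9/5}$ with $\nu$-probability $1 - O(n^{-3/5})$, and together with the a priori control of $|f_{x_0}|^2$ these inequalities force $f \in \Gamma_{a,b}$ once $n$ is large.

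Assembling the pieces and taking logarithms produces
\[
\log Z' \ge nL(a,b) + \log a - \log(b-a) - \frac{bn^{4/5}+a}{b-a} - \beta a^2 n^{4/5} - O(1),
\]
and using $a,b\le B$, together with the elementary inequalities $|\log t| \le \max(1,t^{-1})$ and $\beta a^2(b-a) \le \beta B^3$, every error term is swallowed by $Cn^{4/5}/(b-a)$ for a single constant $C = C(\beta, B, D)$. Multiplying by $e^{-C/h^2}$ from \eqref{zz} supplies the remaining $-C/h^2$ in the exponent. The main subtlety is calibrating the width of the annulus for $|f_{x_0}|^2$: too narrow yields a useless prefactor, too wide breaks the $S_4$ window of $\Gamma_{a,b}$. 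The choice of width $a$, together with the hypothesis $b(1+n^{-1/5})\le B$ which prevents the hard cutoff from ever binding on $\Gamma_{a,b}$, is precisely what makes the two constraints compatible.
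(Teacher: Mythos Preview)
Your approach coincides with the paper's: restrict $Z'$ to $\Gamma_{a,b}$, compare against a complex Gaussian reference with variance $b-a$, and designate a single heavy site carrying essentially all of $S_4$. The only cosmetic difference is that the paper places the Gaussian on all $n$ coordinates and captures the heavy site via the exponential tail $\pp(|\phi_o|^2\approx an)\gtrsim\frac{an^{3/4}}{b-a}e^{-an/(b-a)}$, whereas you handle the heavy coordinate directly with Lebesgue measure on an annulus of area $\pi a$ and put the Gaussian only on the remaining $n-1$ sites. Both routes produce $\vol(\Gamma_{a,b})\gtrsim a(\pi e(b-a))^n$ with the same error exponent, and your decoupling is arguably cleaner.

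There is one calibration slip. With $|f_{x_0}|^2\le an+a$ you have $|f_{x_0}|^4\le a^2n^2+2a^2n+a^2$, so your stated condition $S'_4\le a^2n^{9/5}$ only gives
\[
S_4\le a^2n^2+a^2n^{9/5}+2a^2n+a^2,
\]
which overshoots the $\Gamma_{a,b}$ ceiling $a^2n^2(1+n^{-1/5})=a^2n^2+a^2n^{9/5}$ by $2a^2n+a^2$; the claim that these inequalities ``force $f\in\Gamma_{a,b}$'' is therefore false as written. The fix is immediate: demand instead $S'_4\le a^2(n^{9/5}-2n-1)$, which for large $n$ exceeds $a^2n^{9/5}/2$ and leaves the Chebyshev estimate essentially unchanged. (The paper sidesteps this by controlling $\max_{x\ne o}|f_x|^4\le a^2n^{3/4}$ via a union bound, whence $S'_4\le a^2n^{7/4}$ with room to spare.) Note also that your Chebyshev bound on $S'_4$ requires $a^2n^{4/5}\gg(b-a)^2$ to beat the mean $2(b-a)^2(n-1)$; this is not uniform over all $0<a<b$, but the paper's proof has the analogous defect and the lemma is only ever applied with $a\ge n^{-1/5}$.
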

\begin{proof}
Fix $0< a<b\le B/(1+n^{-1/5})$. Let $\phi = (\phi_x)_{x\in V}$ be a collection of i.i.d.\ complex Gaussian random variables with probability density function 
\[
\frac{1}{\pi(b-a)} e^{-|z|^2/(b-a)}. 
\]
Note that $\phi$ can also be viewed as a random mapping from $V$ into $\cc$. Now, 
\begin{align*}
\pp(\phi\in \Gamma_{a,b}) &= \int_{\Gamma_{a,b}} \frac{1}{\pi^{n} (b-a)^{n}} e^{-\frac{S_2(f)}{b-a}} df\\
&\le \frac{1}{\pi^{n} (b-a)^{n}} \exp\biggl(-\frac{bn}{b-a}(1-n^{-1/5})\biggr) \vol(\Gamma_{a,b}). 
\end{align*}
Consequently,
\begin{equation}\label{vol}
\vol(\Gamma_{a,b}) \ge \exp\Big(\frac{bn}{b-a}(1-n^{-1/5}) + n\log \pi(b-a) \Big) \pp(\phi\in \Gamma_{a,b}). 
\end{equation}
Fix an element $o\in V$. Define the sets 
\begin{align*}
E_1 &:= \{f: a n (1-n^{-1/4}) \le |f_o|^2 \le an (1+n^{-1/4})\},\\
E_2 &:= \{f: \max_{x\ne o} |f_x|^4 \le a^2n^{3/4}\},\\
E_3 &:= \Big\{f: (b-a)n(1-n^{-1/4}) \le \sum_{x\ne o} |f_x|^2 \le (b-a)n(1+n^{-1/4}) \Big\}.
\end{align*}
Suppose $f\in E_1\cap E_2 \cap E_3$. Since $f\in E_1$ and  $f\in E_3$, 
\[
bn(1-n^{-1/4}) \le S_2(f) \le bn(1+n^{-1/4}). 
\]
Again since $f\in E_1$ and $f\in E_2$, if $n\ge C$, 
\begin{align*}
S_4(f) &\le a^2 n^2(1+n^{-1/4})^2 + a^2n^{7/4}\le a^2n^2(1+n^{-1/5}),
\end{align*}
and similarly
\begin{align*}
S_4(f)&\ge a^2n^2(1-n^{-1/5}). 
\end{align*}
Thus if $n\ge C$, then  $E_1\cap E_2 \cap E_3 \subseteq \Gamma_{a,b}$. 

 For any $x\in V$, the real and imaginary parts of $\phi_x$ are i.i.d.\ Gaussian with mean zero and variance $(b-a)/2$. Thus, $|\phi_x|^2$ is an exponential random variable with mean $b-a$. The inequality $e^{-u}-e^{-v} \ge (v-u) e^{-v}$ that holds for $v\ge u$ gives
\begin{align*}
\pp(\phi\in E_1) &\ge \frac{2an^{3/4}}{b-a} \exp\Big(-\frac{an(1+n^{-1/4})}{b-a}\Big). 
\end{align*}
Further, note that by a simple union bound 
\begin{align*}
\pp(\phi \not \in E_2) &\le (n-1)e^{-a n^{3/8}/(b-a)}
\end{align*}
and by Chebychev's inequality,
\begin{align*}
\pp(\phi\not\in E_3) &\le \frac{(b-a)^2(n-1)}{((b-a)(n^{3/4}-1))^2}\le Cn^{-1/2}.
\end{align*}
Thus, if $n\ge C$, then $\pp(\phi \in E_2\cap E_3)\ge 1/2$. Lastly, observe that the event $\{\phi\in E_1\}$ is independent of $\{\phi\in E_2 \cap E_3\}$. Combining these observations, we see that when $n \ge C$, 
\begin{equation}\label{philow}
\begin{split}
\pp(\phi\in \Gamma_{a,b}) &\ge \pp(\phi\in E_1)\pp(\phi \in E_2\cap E_3) \\
&\ge a \exp\Big(-\frac{an}{b-a} - \frac{C n^{3/4}}{b-a}\Big). 
\end{split}
\end{equation}
This inequality, together with \eqref{vol}, gives that if $n\ge C$, 
\[
\vol(\Gamma_{a,b}) \ge a(e\pi(b-a))^n e^{-C n^{4/5}/(b-a)}. 
\]
Since $b\le B/(1+n^{-1/5})$, $\Gamma_{a,b} \subseteq \{f:S_2(f)\le Bn\}$. Thus, if $n \ge C$, 
\begin{align*}
Z' &\ge \int_{\Gamma_{a,b}} e^{\beta n^{-1} S_4(f)}  df \\
&\ge  e^{\beta a^2 n (1-n^{-1/5})} \vol(\Gamma_{a,b}) \\
&\ge a (e^{1+\beta a^2}\pi(b-a))^n e^{-C n^{4/5}/(b-a)} = a e^{nL(a,b) - Cn^{4/5}/(b-a)}.
\end{align*}
This lower bound and \eqref{zz} complete the proof of the lemma.
\end{proof}
\begin{lmm}\label{zab}
For any $n^{-2\ep/5} <a<b\le B$, 
\[
Z'_{a,b} := \int_{\Gamma_{a,b}} e^{\beta n^{-1}S_4(f)} df \le \exp\biggl(nL(a,b) + \frac{Cn^{4/5 + \ep}}{b-a}\biggr). 
\]
\end{lmm}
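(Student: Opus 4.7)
My plan mirrors the lower-bound argument of Lemma \ref{zlow} in reverse, in three steps. First, I would uniformly bound the integrand: on $\Gamma_{a,b}$, $S_4(f) \le a^2 n^2(1+n^{-1/5})$, so
\[
Z'_{a,b} \le e^{\beta a^2 n(1+n^{-1/5})}\,\vol(\Gamma_{a,b}),
\]
and the surplus factor $e^{O(\beta a^2 n^{4/5})}$ is absorbed into the target $e^{Cn^{4/5+\ep}/(b-a)}$ error. Second, I would apply a Gaussian change of measure: let $\phi = (\phi_y)_{y\in V}$ be i.i.d.\ complex Gaussian with variance $b-a$, whose joint density is $(\pi(b-a))^{-n}e^{-S_2(f)/(b-a)}$. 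Since $S_2(f) \le bn(1+n^{-1/5})$ on $\Gamma_{a,b}$, we have $e^{-S_2(f)/(b-a)} \ge e^{-bn(1+n^{-1/5})/(b-a)}$, giving
\[
\vol(\Gamma_{a,b}) \le (\pi(b-a))^n\,e^{bn(1+n^{-1/5})/(b-a)}\,\pp(\phi\in\Gamma_{a,b}).
\]
Since $bn/(b-a)=n+an/(b-a)$, the task reduces to showing
\[
\pp(\phi\in\Gamma_{a,b}) \le n^{O(1)}\,e^{-an/(b-a)}\,e^{Cn^{4/5+\ep}/(b-a)},
\]
which cancels the spurious factor $e^{an/(b-a)}$ and matches the target $(e\pi(b-a))^n$.

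The third and most delicate step bounds this probability via a single-big-jump analysis. I would split $\Gamma_{a,b} = E_1 \cup E_2$, where
\[
E_1 := \Gamma_{a,b} \cap \bigl\{\exists\,o\in V:\ |\phi_o|^2\in [an(1-n^{-1/5+\ep}),\,an(1+n^{-1/5+\ep})]\bigr\}
\]
is the ``single-condensate'' event. A union bound over $o$ together with the exponential density $(b-a)^{-1}e^{-m/(b-a)}$ of $|\phi_o|^2$ gives
\[
\pp(E_1) \le n\int_{an(1-n^{-1/5+\ep})}^{an(1+n^{-1/5+\ep})}\frac{e^{-m/(b-a)}}{b-a}\,dm \le n\,e^{-an/(b-a)+Bn^{4/5+\ep}/(b-a)},
\]
matching the target since $a\le B$. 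On $E_2$, every $|\phi_y|^2 < an(1-n^{-1/5+\ep})$ (the upper tail is ruled out by $|\phi_y|^4 \le S_4 \le a^2n^2(1+n^{-1/5})$). Combining this with $S_4 \ge a^2n^2(1-n^{-1/5})$ and the elementary inequality $\sum_{y\ne\argmax}|\phi_y|^4 \le (n-1)\,M_2^2$, where $M_2$ is the second-largest element of $(|\phi_y|^2)_{y\in V}$, forces $M_2 \ge c\,a\,n^{2/5+\ep/2}$, so $E_2$ requires \emph{two} sites with abnormally large modulus. A Chernoff union bound over such pairs, with the rate $(|\phi_{o_1}|^2+|\phi_{o_2}|^2)/(b-a)$ saddled against the joint $S_4$ constraint and the cap $|\phi_{o_1}|^2<an(1-n^{-1/5+\ep})$, yields $\pp(E_2) \le n^{O(1)}\,e^{-an/(b-a)-c\,a\,n^{9/10+\ep/2}/(b-a)}$, which is trivially smaller than the $E_1$ bound. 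Combining the two estimates gives the probability bound above; substitution into Steps 1--2 yields the lemma.

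The main obstacle is the control of $E_2$: the naive Gaussian upper bound using only the $S_2$ constraint overcounts by exactly $e^{an/(b-a)}$, and this factor must be recovered from the $S_4$ constraint through a single-big-jump principle ruling out multi-site condensation. The precise cutoff $n^{-1/5+\ep}$ in the definition of $E_1$ is what generates the final error exponent $n^{4/5+\ep}$ rather than $n^{4/5}$; the hypothesis $a>n^{-2\ep/5}$ ensures that $a\,n^{2/5+\ep/2}$ is large enough for the $M_2$ lower bound to be nontrivial and that all polynomial $n^{O(1)}$ factors from the union bounds are absorbed into $e^{Cn^{4/5+\ep}/(b-a)}$.
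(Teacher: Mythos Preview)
Your Steps 1 and 2 (uniform bound on the integrand, Gaussian change of measure) and your bound on $\pp(E_1)$ are correct and match the paper exactly. The gap is in Step 3, the control of $E_2$.

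Your two-site reduction does not yield the claimed bound. From $S_4\ge a^2n^2(1-n^{-1/5})$, $M_1<an(1-n^{-1/5+\ep})$ and the inequality $\sum_{y\ne\argmax}|\phi_y|^4\le(n-1)M_2^2$ you correctly deduce $M_2\ge c\,a\,n^{2/5+\ep/2}$. But this alone gives only $\pp(E_2)\le n^2\exp(-2c\,a\,n^{2/5+\ep/2}/(b-a))$, which is far larger than $e^{-an/(b-a)}$. Trying to recover the missing factor by ``saddling'' $(m_1,m_2)$ against the $S_4$ constraint fails: if you minimise $m_1+m_2$ subject to $m_1^2+(n-1)m_2^2\ge R$ and $m_2\le m_1<M$, the minimum is attained near $m_1=m_2\approx an^{1/2}$, giving $m_1+m_2\approx 2an^{1/2}\ll an$. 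Switching to the sharper bound $\sum_{y\ne\argmax}|\phi_y|^4\le M_2\cdot S_2\le M_2\,bn$ does not help either: the relaxed minimum of $m_1+m_2$ is then $\approx 2a^2n/b$, which is $<an$ whenever $a<b/2$. In both cases the extremal $(m_1,m_2)$ correspond to configurations that violate the $S_2$ constraint of $\Gamma_{a,b}$, but your two-variable relaxation has already discarded that information. In short, $E_2$ cannot be handled as a two-site event; configurations with many moderately large sites must be ruled out by a separate mechanism.

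The paper does this by a three-way split rather than two: it shows $\Gamma_{a,b}\subseteq\Gamma'_{a,b}\cup A_1\cup A_2$, where $\Gamma'_{a,b}$ is your $E_1$ (with a second-largest cap built in), $A_1=\{|\{x:|f_x|>n^{(1-\ep)/5}\}|>n^{(4+2\ep)/5}\}$ captures the ``many moderate sites'' scenario, and $A_2=\{\exists\,U,\ |U|\le n^{(4+2\ep)/5},\ \sum_{x\in U}|f_x|^2\ge an(1+n^{-1/5+\ep})\}$ captures the total-mass overshoot. The inclusion uses the hypothesis $a>n^{-2\ep/5}$ to bound the $S_4$ contribution from outside $U$ by $a^2n^{9/5}$, then combines the $S_4$ lower bound with the $A_2^c$ mass cap on $U$ to force a single site of size $\ge an(1-3n^{-1/5+\ep})$. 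The probability $\pp(\phi\in A_2)$ is then bounded by a union over sets $U$ and a Gamma tail estimate for $\sum_{x\in U}|\phi_x|^2$, which is what produces the crucial $e^{-an/(b-a)}$ factor. This set-level argument is the missing ingredient in your sketch.
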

\begin{proof}
Fix $n^{-2\ep/5}< a< b\le B$. Let $\phi$ be the random vector defined in the proof of Lemma \ref{zlow}. Let
\begin{align*}
\Gamma_{a,b}' &:= \bigcup_{o\in V} \Big\{f\in \Gamma_{a,b} : an(1-3n^{-1/5+\ep})\le |f_o|^2\le an(1+n^{-1/5+\ep}), \\
&\qquad \qquad \qquad \max_{x\ne o} |f_x|^2 \le 4an^{4/5 + \ep}\Big\}. 
\end{align*}
Recall that for any $o$, $|\phi_o|^2$ is an exponential random variable with mean $b-a$. Thus, 
\begin{equation}\label{phigamma}
\begin{split}
\pp(\phi\in \Gamma_{a,b}') &\le \sum_{o\in V}\pp(|\phi_o|^2\ge an(1-3n^{-1/5+\ep}))\\
&= n \exp\Big(-\frac{an}{b-a} + \frac{C n^{4/5+\ep}}{b-a}\Big).   
\end{split}
\end{equation}
Next, define 
\begin{align*}
A_1 &:= \Big\{f: |\{x: |f_x|> n^{(1-\ep)/5}\}| > n^{(4+2\ep)/5}\Big\}, \\
A_2 &:= \Big\{f: \exists U\subseteq V, \, |U|\le n^{(4+2\ep)/5}, \, \sum_{x\in U}|f_x|^2 \ge an(1+n^{-1/5+\ep})\Big\}.
\end{align*}
We claim that if $n > a^{-5/2\ep}$ (which is true by assumption), then 
\begin{align}\label{claim1}
\Gamma_{a,b}\subseteq \Gamma_{a,b}'\cup A_1\cup A_2. 
\end{align}
To see this, take any $f\in A_1^c \cap A_2^c \cap \Gamma_{a,b}$. Let
\[
U := \{x:|f_x|> n^{(1-\ep)/5}\}. 
\]
Since $f\in A_1^c$, $|U|\le n^{(4+2\ep)/5}$. Therefore, since $f\in A_2^c$, 
\begin{equation}\label{fa1}
\sum_{x\in U}|f_x|^2 < an(1+ n^{-1/5+\ep}). 
\end{equation}
Again since $f\in A_2^c$ we also have
\begin{equation}\label{fa2}
\max_x|f_x|^2 < an(1+n^{-1/5+\ep}). 
\end{equation}
Note that since $n > a^{-5/2\ep}$, 
\[
\sum_{x\not \in U} |f_x|^4 = \sum_{x\, : \, |f_x|\le n^{(1-\ep)/5}} |f_x|^4 \le n^{1 + 4(1-\ep)/5} < a^2 n^{9/5}.
\]
Since $f\in \Gamma_{a,b}$, 
\[
\sum_x|f_x|^4 \ge a^2n^2(1-n^{-1/5}).
\]
Thus, 
\begin{align*}
\sum_{x\in U} |f_x|^4 &= \sum_x|f_x|^4 - \sum_{x\not \in U} |f_x|^4\\
&\ge a^2n^2(1-n^{-1/5}- n^{-1/5})= a^2 n^2(1-2n^{-1/5}).  
\end{align*}
On the other hand by \eqref{fa1},
\begin{align*}
\sum_{x\in U} |f_x|^4 &\le (\max_x|f_x|^2) \sum_{x\in U}|f_x|^2 \\
&\le (\max_x|f_x|^2) an(1+n^{-1/5+\ep}). 
\end{align*}
Combining the last two displays implies that   
\begin{align*}
\max_x |f_x|^2 &\ge \frac{a^2n^2(1-2n^{-1/5})}{an(1+n^{-1/5+\ep})} \\
&\ge an(1- 3n^{-1/5+\ep}). 
\end{align*}
Together with \eqref{fa2}, this shows that 
\begin{equation}\label{gcond1}
an(1-3n^{-1/5+\ep}) \le \max_x|f_x|^2 \le an(1+n^{-1/5+\ep}). 
\end{equation}
Next, let $o$ be a vertex at which $f$ attains its maximum modulus. Let $x$ be any other vertex. If $x\not \in U$, then since $n > a^{-5/2\ep}$, $|f_x|^2 \le n^{2(1-\ep)/5}< an^{2/5}\le an^{4/5+\ep}$. If $x\in U$, then by \eqref{fa1},
\[
|f_o|^2 + |f_x|^2 \le \sum_{y\in U} |f_y|^2 \le an(1+n^{-1/5+\ep}), 
\]
and therefore by \eqref{gcond1},
\begin{align*}
|f_x|^2 \le an(1+n^{-1/5+\ep}) - an(1-3n^{-1/5+\ep}) = 4 an^{4/5+\ep}. 
\end{align*}
Hence $f \in \Gamma_{a,b}'$ from the above display and \eqref{gcond1}, and the claim \eqref{claim1} follows. 
Consequently, if $n > a^{-5/2\ep}$, 
\begin{equation}\label{gammagamma}
\pp(\phi\in \Gamma_{a,b}) \le \pp(\phi\in \Gamma_{a,b}') + \pp(\phi\in A_1) + \pp(\phi\in A_2). 
\end{equation}
If $\phi\in A_1$, then there is a set $U\subseteq V$ such that $|U|= \lceil n^{(4+2\ep)/5}\rceil$ and $|\phi_x| > n^{(1-\ep)/5}$ for all $x\in U$. Therefore, a union bound over all possible $U$ gives 
\begin{equation}\label{a1}
\begin{split}
\pp(\phi \in A_1) &\le {n \choose \lceil n^{(4+2\ep)/5} \rceil} \Big(e^{-n^{2(1-\ep)/5}/(b-a)}\Big)^{n^{(4+2\ep)/5}}\\
&\le \exp\biggl(Cn^{(4+2\ep)/5}\log n  - \frac{n^{6/5}}{b-a}\biggr). 
\end{split}
\end{equation}
Take any $U\subseteq V$ and let $j := |U|$.  Then  $(b-a)^{-1}\sum_{x\in U} |\phi_x|^2$ is the sum of $j$ i.i.d.\ exponential random variables with mean $1$. Thus, for any $t> 2$, 
\begin{align*}
\pp\biggl(\sum_{x\in U} |\phi_x|^2 \ge (b-a)t\biggr) 
 &= \int_t^\infty \frac{x^{j-1}}{(j-1)!} e^{-x} dx \\
&= e^{-t}\int_0^\infty \frac{(x+t)^{j-1}}{(j-1)!} e^{-x} dx \\
&\le e^{-t}\int_0^\infty \frac{2^{j-2}(x^{j-1}+t^{j-1})}{(j-1)!} e^{-x} dx\\
&\le e^{-t}(2^{j-2} + t^{j-1}) \le t^j e^{-t}. 
\end{align*}
On the other hand, for $0\le t\le2$, the bound $Ce^{-t}$ works. If $\phi\in A_2$ then there exists $U\subseteq V$ with $|U|=\lfloor n^{(4+2\ep)/5}\rfloor$ and $\sum_{x\in U} |f_x|^2 \ge an (1+n^{-1/5+\ep})$. 
Thus, with $t = (b-a)^{-1} an (1+n^{-1/5+\ep})$ and $j= \lfloor n^{(4+2\ep)/5}\rfloor$, the above inequality, the assumption that $a> n^{-2\ep/5}$, and a union bound over all possible $U$ shows that if $n > C$, 
\begin{align}
\pp(\phi\in A_2) &\le {n\choose \lfloor n^{(4+2\ep)/5}\rfloor} \biggl(\frac{Cn}{b-a}\biggr)^{n^{(4+2\ep)/5}}\exp\biggl(-\frac{an(1+n^{-1/5+\ep})}{b-a}\biggr)\nonumber \\
&\le \exp\biggl(Cn^{(4+2\ep)/5}\log \frac{n}{b-a} - \frac{an(1+n^{-1/5+\ep})}{b-a}\biggr)\nonumber\\
&\le \exp\biggl(Cn^{(4+2\ep)/5}\log \frac{n}{b-a} - \frac{an}{b-a} - \frac{n^{(4+3\ep)/5}}{b-a}\biggr)\nonumber\\
&\le \exp\biggl(-\frac{n^{(4+3\ep)/5}}{2(b-a)} - \frac{an}{b-a}\biggr). \label{a2}
\end{align}
Together with \eqref{phigamma}, \eqref{gammagamma} and \eqref{a1}, this shows that if $n > C$ (and $n > a^{-5/2\ep}$), then 
\[
\pp(\phi\in \Gamma_{a,b}) \le \exp\biggl(\frac{Cn^{4/5+\ep}}{b-a} - \frac{an}{b-a}\biggr).
\]
Now, similar to the beginning of Lemma \ref{zlow},
\begin{align*}
\pp(\phi\in \Gamma_{a,b}) &= \int_{\Gamma_{a,b}} \frac{1}{\pi^{n} (b-a)^{n}} e^{-\frac{S_2(f)}{b-a}} df\\
&\ge \frac{1}{\pi^{n} (b-a)^{n}} e^{-\frac{bn}{b-a}(1+n^{-1/5})} \vol(\Gamma_{a,b}). 
\end{align*}
Therefore, if $n > a^{-5/2\ep}$, 
\begin{equation}\label{vol2}
\begin{split}
\vol(\Gamma_{a,b}) &\le \exp\Big(\frac{bn}{b-a}(1+n^{-1/5}) + n\log \pi(b-a) \Big) \pp(\phi\in \Gamma_{a,b})\\
&\le (e\pi(b-a))^n e^{Cn^{4/5+\ep}/(b-a)}. 
\end{split}
\end{equation}
Finally, observe that
\begin{align*}
Z_{a,b}' = \int_{\Gamma_{a,b}} e^{\beta n^{-1} S_4(f)} df\le e^{\beta a^2 n (1+ n^{-1/5}) } \vol(\Gamma_{a,b}).
\end{align*}
This completes the proof of the lemma. 
\end{proof}
\begin{lmm}\label{ball}
For each $n\ge 1$ and $r > 0$, let 
\[
U_{n,r}:= \{f\in \cc^n: S_2(f) \le r n\}. 
\]
Then $\vol(U_{n,r})\le (r\pi e)^n = e^{nL(0,r)}$.
\end{lmm}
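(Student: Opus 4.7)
The plan is to interpret $U_{n,r}$ as a standard Euclidean ball and invoke the well-known volume formula together with a Stirling-type estimate for the factorial. Identifying $\cc^n$ with $\rr^{2n}$ via real and imaginary parts, the condition $S_2(f) = \sum_x |f_x|^2 \le rn$ becomes exactly the condition that the corresponding real vector has Euclidean norm at most $R := \sqrt{rn}$. Thus $U_{n,r}$ is (up to a measure-zero boundary) the closed Euclidean ball of radius $R$ in $\rr^{2n}$.

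Next, I would apply the classical formula for the volume of the unit ball in $\rr^{2n}$, which in even dimension simplifies nicely: $\vol(B_{2n}(R)) = \pi^n R^{2n}/n!$. Substituting $R^{2n} = (rn)^n$ gives
\[
\vol(U_{n,r}) = \frac{\pi^n (rn)^n}{n!}.
\]

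To finish, I would use the elementary lower bound $n! \ge n^n e^{-n}$, which follows immediately from the Taylor series expansion $e^n = \sum_{k\ge 0} n^k/k! \ge n^n/n!$. Plugging this in,
\[
\vol(U_{n,r}) \le \frac{\pi^n (rn)^n e^n}{n^n} = (r\pi e)^n,
\]
and since $L(0,r) = \log(r\pi e)$ by the definition \eqref{lab}, this is exactly $e^{nL(0,r)}$, completing the proof. There is no real obstacle here; the only subtlety worth mentioning is the normalization convention identifying complex Lebesgue measure on $\cc^n$ with $2n$-dimensional real Lebesgue measure, which is the convention used throughout the paper (cf.\ the proof of Lemma \ref{zlow}, where $df = \prod_x df_x$ is treated as real $2n$-dimensional Lebesgue measure).
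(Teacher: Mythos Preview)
Your proof is correct, but it proceeds differently from the paper. You compute the volume exactly via the closed formula $\vol(U_{n,r}) = \pi^n (rn)^n/n!$ for the $2n$-dimensional Euclidean ball, and then invoke the Stirling-type bound $n! \ge (n/e)^n$. The paper instead uses a Gaussian comparison: since the complex Gaussian density $(\pi r)^{-n} e^{-S_2(f)/r}$ integrates to $1$ over all of $\cc^n$, restricting to $U_{n,r}$ and using $S_2(f)\le rn$ there gives $1 \ge (\pi r)^{-n} e^{-n}\vol(U_{n,r})$ directly. The paper's argument is slightly slicker in that it avoids both the ball-volume formula and Stirling, and it matches the Gaussian-comparison technique used throughout (e.g., in Lemmas~\ref{zlow} and~\ref{zab}); your argument, on the other hand, makes the source of the constant $(r\pi e)^n$ completely explicit and is equally valid.
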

\begin{proof}
Note that
\begin{align*}
1 &= \int_{\cc^n} \frac{e^{-S_2(f)/r}}{(\pi r)^n} df\\
&\ge \int_{U_{n,r}} \frac{e^{-S_2(f)/r}}{(\pi r)^n} df\ge \frac{1}{(r\pi e)^n} \vol(U_{n,r}).
\end{align*}
(The last inequality holds because $S_2(f)\le rn$ on $U_{n,r}$.)
\end{proof}
\begin{lmm}\label{wc}
For each $f\in \cc^V$, let $a(f):=\sqrt{S_4(f)}/n$ and $b(f):= S_2(f)/n$. Note that $a\le b$. For each $c> 0$ let 
\[
W_c := \{f\in \cc^V: b(f)\le B, \, b(f)-a(f) \le c\}.
\]
Then 
\[
\int_{W_c} e^{\beta n^{-1} S_4(f)} df \le e^{(C + \log c) n}. 
\]
\end{lmm}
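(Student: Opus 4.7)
The plan is to reduce the integral to a volume bound and then to estimate $\vol(W_c)$ by splitting $W_c$ according to whether $S_2(f)$ is small or the mass is concentrated on a single site. Since $b(f)\le B$ on $W_c$ and $S_4(f)\le S_2(f)^2\le B^2 n^2$ (from $\sum|f_x|^4\le(\sum|f_x|^2)^2$), the exponential weight is uniformly bounded: $e^{\beta n^{-1}S_4(f)}\le e^{\beta B^2 n}$. So it suffices to show $\vol(W_c)\le e^{(C+\log c)n}$ after absorbing the extra $e^{\beta B^2 n}$ into the constant $C$.

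Next split $W_c = W_c^{(1)}\cup W_c^{(2)}$ with $W_c^{(1)}:=W_c\cap\{S_2(f)\le cn\}$ and $W_c^{(2)}:=W_c\cap\{S_2(f)>cn\}$. On $W_c^{(1)}$ we have $W_c^{(1)}\subseteq U_{n,c}$, so Lemma \ref{ball} immediately gives $\vol(W_c^{(1)})\le(c\pi e)^n$. The crux is the following concentration estimate on $W_c^{(2)}$: letting $M(f):=\max_x|f_x|^2$, the pointwise inequality $S_4(f)=\sum|f_x|^4\le M(f)\sum|f_x|^2=M(f)S_2(f)$, combined with $S_4(f)\ge(S_2(f)-cn)^2$ (which follows from $b(f)-a(f)\le c$ upon squaring, nontrivial precisely because $S_2(f)>cn$ on $W_c^{(2)}$), gives $M(f)S_2(f)\ge(S_2(f)-cn)^2$ and hence $M(f)\ge S_2(f)-2cn$. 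Equivalently, if $x^*=x^*(f)$ denotes a vertex achieving the maximum, then $\sum_{x\ne x^*}|f_x|^2\le 2cn$.

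Taking a union bound over $x^*$, $W_c^{(2)}\subseteq\bigcup_{x^*\in V}A_{x^*}$ where $A_{x^*}:=\{f\in\cc^V:|f_{x^*}|^2\le Bn,\ \sum_{x\ne x^*}|f_x|^2\le 2cn\}$. Each $A_{x^*}$ factorizes into a disk in $\cc$ of area $\pi Bn$ and a ball in $\cc^{n-1}$ whose volume Lemma \ref{ball} bounds by $(2cn\pi e/(n-1))^{n-1}\le(4c\pi e)^{n-1}$ for $n\ge 2$. Summing over $n$ choices of $x^*$ and combining with the bound on $W_c^{(1)}$ yields
\[
\vol(W_c)\le(c\pi e)^n+n^2\pi B(4c\pi e)^{n-1},
\]
and taking logarithms and dividing by $n$ shows this is $\le e^{(C+\log c)n}$ for a suitable constant $C$ (depending only on $B$ and absolute constants). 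The main (though short) obstacle is the concentration bound $\sum_{x\ne x^*}|f_x|^2\le 2cn$ on $W_c^{(2)}$, which couples the $b-a\le c$ constraint with the inequality $S_4\le M\cdot S_2$; once that is in hand, the remaining volume estimates are routine applications of Lemma \ref{ball}.
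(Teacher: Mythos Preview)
Your proof is correct and follows essentially the same approach as the paper: bound the exponential trivially by $e^{\beta B^2 n}$, use $S_4\le M\cdot S_2$ to show that the mass off the maximal site is at most $2cn$, then take a union bound over the location of the maximum and apply Lemma~\ref{ball}. The only difference is cosmetic: the paper derives $\sum_{x\ne o}|f_x|^2 \le (b^2-a^2)n/b \le 2(b-a)n\le 2cn$ directly for all $f\in W_c$, so your split into $\{S_2\le cn\}$ and $\{S_2>cn\}$ is unnecessary (though harmless).
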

\begin{proof}
Take any $f\in W_c$. Let $a= a(f)$ and $b= b(f)$. Then 
\begin{align*}
\max_x |f_x|^2 &\ge \frac{\sum_x|f_x|^4}{\sum_x|f_x|^2} = \frac{a^2n}{b}. 
\end{align*}
Thus, if $o$ is a vertex at which $|f|$ is maximized, then
\[
\sum_{x\ne o} |f_x|^2 \le \frac{b^2 -a^2}{b}n\le 2(b-a)n. 
\]
Let $c' := cn/(n-1)$. The above inequality and Lemma  \ref{ball} show that 
\begin{align*}
\vol(W_c) &\le \sum_{o\in V} \vol\biggl(\biggl\{f : |f_o|^2 \le Bn, \, \sum_{x\ne o} |f_x|^2 \le 2cn\biggr\}\biggr)\\
&= B\pi n^2 \vol(U_{n-1, 2c'})\\
&\le B\pi n^2 (2c'\pi e)^{n-1}. 
\end{align*}
Since 
\begin{align*}
\int_{W_c} e^{\beta n^{-1} S_4(f)} df &\le e^{\beta B^2 n} \vol(W_c),
\end{align*}
this completes the proof. 
\end{proof}
\begin{lmm}\label{va}
For each $a > 0$, let 
\[
V_{a} := \{f: b(f) \le B, \, a(f) \le a\},
\]
Then 
\[
\int_{V_{a}} e^{\beta n^{-1} S_4(f)} df \le  e^{\beta a^2 n + L(0, B)n}. 
\]
\end{lmm}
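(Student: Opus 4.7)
The plan is to prove Lemma \ref{va} by directly bounding the integrand in a pointwise manner on $V_a$ and then controlling the volume of the region of integration using Lemma \ref{ball}.

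First, I would unpack what the constraints defining $V_a$ mean in terms of $S_2$ and $S_4$. Since $a(f) = \sqrt{S_4(f)}/n$ and $b(f) = S_2(f)/n$, the condition $a(f) \le a$ is equivalent to $S_4(f) \le a^2 n^2$, while $b(f) \le B$ is equivalent to $S_2(f) \le Bn$. The first of these gives an immediate pointwise bound on the integrand: for every $f \in V_a$,
\[
e^{\beta n^{-1} S_4(f)} \le e^{\beta n^{-1} \cdot a^2 n^2} = e^{\beta a^2 n}.
\]
This handles the exponential factor with a deterministic constant, so the remaining task is purely a volume estimate.

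Next, since $V_a \subseteq U_{n, B} = \{f \in \cc^V : S_2(f) \le Bn\}$ by the constraint $b(f)\le B$, I can pull the pointwise bound out of the integral and write
\[
\int_{V_a} e^{\beta n^{-1} S_4(f)} df \le e^{\beta a^2 n} \vol(V_a) \le e^{\beta a^2 n} \vol(U_{n,B}).
\]
Finally, Lemma \ref{ball} (applied with $r = B$ on $\cc^V$, which is $\cc^n$ since $|V|=n$) gives $\vol(U_{n,B}) \le (B\pi e)^n = e^{n L(0,B)}$, where the last equality is the definition \eqref{lab} at $a=0$, $b=B$. Combining yields the claimed bound $e^{\beta a^2 n + L(0,B)n}$.

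There is no real obstacle here; the lemma is a straightforward combination of the trivial pointwise bound on the quartic-exponential factor with the $L^\infty$ Gaussian-style volume estimate already established in Lemma \ref{ball}. The only thing worth checking is that the constraint $a(f)\le a$ is used solely to bound the integrand while the constraint $b(f)\le B$ is used solely to bound the domain, and both fit together without any leftover error term—hence the clean form of the right-hand side.
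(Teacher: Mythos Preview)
Your proof is correct and follows exactly the same approach as the paper: bound the integrand pointwise by $e^{\beta a^2 n}$ using $a(f)\le a$, enlarge the domain to $U_{n,B}$ using $b(f)\le B$, and apply Lemma~\ref{ball}. The paper's proof is simply a one-line version of what you wrote.
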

\begin{proof}
Simply note that 
\[
\int_{V_{a}} e^{\beta n^{-1} S_4(f)} df \le e^{\beta a^2 n} \vol(U_{n,B}),
\]
and apply Lemma \ref{ball}.
\end{proof}
\begin{lmm}\label{ftheta}
For each $\theta \ge 0$, let $f_\theta: [0,1)\ra\rr$ be the function
\[
f_\theta(x) := \theta x^2 + \log(1-x). 
\]
Then there is a $\theta_c >0$ such that if $\theta < \theta_c$, $f_\theta$ has a unique maximum at $x=0$, whereas if $\theta >\theta_c$, then $f_\theta$ has a unique maximum at the  point 
\[
x^*(\theta) := \frac{1}{2}+\frac{1}{2}\sqrt{1-\frac{2}{\theta}} > 0.
\] 
When $\theta = \theta_c$, $f_\theta$ attains its maximum at two points, one at zero, and the other $x^*(\theta_c)$. The number $\theta_c$ is the unique real solution of 
\[
\frac{\theta}{2} - \frac{1}{2}+ \frac{\theta}{2} \sqrt{1-\frac{2}{\theta}} + \log\biggl(\frac{1}{2}-\frac{1}{2}\sqrt{1-\frac{2}{\theta}}\biggr) = 0. 
\]
Numerically, $\theta_c\approx 2.455407$. Lastly, if $F(\beta, B)$ is the constant defined in~\eqref{fbeta}, then 
\[
F(\beta, B) = \sup_{0\le a< b\le B} L(a,b) = \sup_{0\le a< B} L(a, B),
\]
and
\begin{equation}\label{fl}
L(a,B) = \log (B\pi e) + f_{\beta B^2}(a/B).
\end{equation}

\end{lmm}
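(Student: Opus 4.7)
The plan is to reduce everything to analyzing the single-variable function $f_\theta(x)=\theta x^2+\log(1-x)$ on $[0,1)$. I would start by computing $f_\theta'(x)=2\theta x-(1-x)^{-1}$, whose zeros satisfy $2\theta x(1-x)=1$, i.e.\ $2\theta x^2-2\theta x+1=0$. This quadratic has real roots in $[0,1)$ iff $\theta\ge 2$, and the roots are $x_\pm(\theta)=\tfrac{1}{2}\pm\tfrac{1}{2}\sqrt{1-2/\theta}$. For $\theta\le 2$, the map $x\mapsto 2\theta x(1-x)$ is bounded above by $\theta/2\le 1$ on $[0,1)$, so $f_\theta'\le 0$ (strictly, except possibly at a single point when $\theta=2$) and $f_\theta$ is strictly decreasing with unique maximum at $0$. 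For $\theta>2$, the values $f_\theta'(0)=-1$, $f_\theta'(1/2)=\theta-2>0$, and $f_\theta'(x)\to-\infty$ as $x\to 1^-$ show that $x_-$ is a local minimum and $x^*:=x_+$ a local maximum; since $f_\theta\to-\infty$ at the right endpoint, the global maximum on $[0,1)$ is $\max(f_\theta(0),f_\theta(x^*))$.

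Next I would evaluate $f_\theta(x^*)$ in closed form. The critical-point identity $2\theta(x^*)^2=2\theta x^*-1$ gives $\theta(x^*)^2=\theta x^*-\tfrac{1}{2}$, and combined with $\theta x^*=\tfrac{\theta}{2}+\tfrac{\theta}{2}\sqrt{1-2/\theta}$ and $1-x^*=\tfrac{1}{2}-\tfrac{1}{2}\sqrt{1-2/\theta}$, this yields
\[
f_\theta(x^*)=\frac{\theta}{2}-\frac{1}{2}+\frac{\theta}{2}\sqrt{1-\frac{2}{\theta}}+\log\!\Big(\tfrac{1}{2}-\tfrac{1}{2}\sqrt{1-2/\theta}\Big)=m(\theta),
\]
which is precisely the function in \eqref{mdef}. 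Since $f_\theta(0)=0$, the global maximum equals $\max(0,m(\theta))$, attained at $0$ when $m(\theta)<0$, at $x^*$ when $m(\theta)>0$, and at both when $m(\theta)=0$. To locate the sign change, I would invoke the envelope identity $m'(\theta)=\partial_\theta f_\theta(x)\big|_{x=x^*(\theta)}=(x^*(\theta))^2>0$ to get strict monotonicity of $m$ on $[2,\infty)$, then verify $m(2)=1/2-\log 2<0$ and $m(3)=1+\tfrac{\sqrt 3}{2}+\log\!\big(\tfrac{\sqrt 3-1}{2\sqrt 3}\big)>0$ by direct numerical bounds. This produces a unique zero $\theta_c\in(2,3)$, and the trichotomy on the location of the maximum follows at once.

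For the variational part, observe that $L(a,b)=\beta a^2+\log(b-a)+\log\pi+1$ is strictly increasing in $b$ for $b>a$, so $\sup_{a<b\le B}L(a,b)=L(a,B)$ and the first two suprema in the claim coincide. Substituting $x=a/B$ rewrites $L(a,B)=\beta B^2 x^2+\log B+\log(1-x)+\log\pi+1=\log(B\pi e)+f_{\beta B^2}(x)$, which is exactly \eqref{fl}. Taking the supremum over $x\in[0,1)$ and inserting the value of $\sup f_{\beta B^2}$ from the first part gives $\sup L(a,B)=\log(B\pi e)$ when $\beta B^2\le\theta_c$ and $\log(B\pi e)+m(\beta B^2)$ when $\beta B^2>\theta_c$, matching the definition \eqref{fbeta} of $F(\beta,B)$.

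The argument is essentially elementary calculus; the only step requiring any algebraic care is the simplification that turns $f_\theta(x^*)$ into exactly the expression $m(\theta)$, and the main conceptual convenience is the envelope-theorem trick that makes monotonicity of $m$ an immediate consequence of $x^*>0$. I do not foresee any genuine obstacle.
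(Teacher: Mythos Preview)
Your argument is correct and covers all parts of the lemma. The route differs slightly from the paper's: the paper first defines $\theta_c$ abstractly as $\inf\{\theta:\sup f_\theta>0\}$, proves uniqueness of the maximum at $0$ for $\theta<\theta_c$ by a contradiction argument (if $f_\theta(x)=0$ for some $x>0$ then $f_{\theta'}(x)>0$ for $\theta'\in(\theta,\theta_c)$), and handles the critical case via a uniform-convergence limit as $\theta\downarrow\theta_c$; only afterwards does it identify $\theta_c$ through the displayed equation. You instead go straight to the explicit formula $f_\theta(x^*)=m(\theta)$ and let the sign of $m$ do all the work, with the envelope identity $m'(\theta)=(x^*(\theta))^2>0$ giving monotonicity cleanly (the paper merely asserts this monotonicity ``can be easily shown''). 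Your approach is more direct and computational; the paper's is a bit more conceptual but ends up needing the same calculus ingredients. The treatment of the $L$/$F$ part is identical in substance.
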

\begin{proof}
Let $M_\theta := \sup_{0\le x< 1} f_\theta (x)$. It is easy to see that $M_0=0$, and $M_\theta >0$ for sufficiently large $\theta$. Moreover $M_\theta$ is an non-decreasing function of $\theta$. Let 
$$\theta_c:=\inf\{\theta: M_\theta >0\} = \sup\{\theta: M_\theta = 0\}.$$ If $\theta < \theta_c$, then we claim that $f_\theta$ has a unique maximum at zero. It is clear that zero is a point of maximum. To show that it is unique, suppose not. Then there is some $x >0$ where $f_\theta (x)=0$. Then for a $\theta'\in (\theta, \theta_c)$, $f_{\theta'}(x) >0$, giving a contradiction to the definition of $\theta_c$. 

When $\theta >\theta_c$, it is clear that the maximum must be attained at a non-zero point. To show that it is unique, observe that 
\[
f_\theta'(x) = 2\theta x - \frac{1}{1-x}, \ \ f_\theta''(x) = 2\theta - \frac{1}{(1-x)^2},
\]
and therefore there may be at most two points in $(0,1)$ where $f_\theta'$ vanishes, and exactly one of them can be  a maximum. Solving the quadratic equation shows that the maximum is attained at $x^*(\theta)$ (and also shows that $\theta_c\ge 2$). From the formula, it is clear that when $\theta\ra\theta_c$, the limit of $x^*(\theta)$ exists and is positive. By uniform convergence on compact intervals, it follows that $f_{\theta_c}(x^*(\theta_c)) =0$. The equation $f_{\theta_c}(x^*(\theta_c)) = 0$ is equivalent to the defining equation for $\theta_c$. It is a unique solution of the equation because the left hand side can be easily shown to be a strictly  increasing function of $\theta$ in~$[2,\infty)$.

Finally, note that for any $0\le a<b\le B$, $L(a,b)\le L(a,B)$, and the easy identity \eqref{fl} implies the relation between $F$ and $L$ because $m(\theta)$ defined in~\eqref{mdef} is nothing but $M_\theta$ if $\theta > \theta_c$. 
\end{proof}

\section{Proof of Theorem \ref{zthm} (Partition Function)}\label{zthmproof}
Let $f_\theta$ be defined as in Lemma \ref{ftheta}. Recall the equation \eqref{fl}, which implies that  $L(a,B)$ is maximized at $Bx^*(a/B)$. Let us call this point $a^*$. (The point $a^*$ is called $a(\beta, B)$ in the statement of Theorem \ref{phase}.) Note that $L(a,b)$ is maximized at $(a^*, B)$, since $L(a,b)\le L(a,B)$ for any $0\le a< b\le B$. 

By Lemma \ref{ftheta}, we know that $a^* = 0$ if $\beta B^2 <\theta_c$ and $a^* > 0$ when $\beta B^2 > \theta_c$. Moreover, when $\beta B^2 \ne \theta_c$, $a^*$ is the unique point of maximum. 

\subsection{Supercritical Partition Function}
First, consider the case $\beta B^2 > \theta_c$, so that $a^* > 0$. By Lemma \ref{zlow}, it follows that if $n > C$, 
\begin{align*}
\frac{\log Z}{n} &\ge L(a^*, B/(1+n^{-1/5})) - C n^{-1/5} - \frac{C}{nh^2}.
\end{align*}
But $L$ has bounded derivatives in a neighborhood of $(a^*, B)$. Thus, if $n > C$, 
\begin{align}\label{low1}
\frac{\log Z}{n} &\ge  L(a^*, B) - C n^{-1/5} - \frac{C}{nh^2}.
\end{align}
Since $L(a^*, B) > L(0, B)$, there exists $a_0 =a_0(\beta, B)$ small enough such that
\[
\beta a_0^2 + L(0,B) < L(a^*, B) - \delta, 
\]
where 
\[
\delta := \frac{1}{2}(L(a^*, B) - L(0,B)).
\]
Therefore by Lemma \ref{va},
\begin{align}\label{va0}
\int_{V_{a_0}}e^{\beta n^{-1}S_4(f)} df \le e^{n(L(a^*, B)-\delta)}. 
\end{align}
Since $\log c \ra -\infty$ as $c\ra 0$,  there exists $c_0 = c_0(\beta, B) > 0$ such that  
\[
C + \log c_0 < L(a^*, B) - \delta,
\]
where $C$ is the constant in Lemma \ref{wc}.  Therefore by Lemma \ref{wc},
\begin{align}\label{wc0}
\int_{W_{c_0}}e^{\beta n^{-1}S_4(f)} df \le e^{n(L(a^*, B)-\delta)}. 
\end{align}
Now take any $f$ such that $b(f)\le B$. (Recall the definitions of $a(f)$ and $b(f)$ from the statement of Lemma \ref{wc}.) 
Let $\ma$ be a finite collection of $(a,b)$ such that 
\[
\bigcup_{(a,b)\in \ma} \Gamma_{a,b} \supseteq \{f: b(f)\le B, \, a(f) > a_0, \, b(f)-a(f) > c_0\}. 
\]
It is easy to see that if $n > C$, then $\ma$ can be chosen such that $|\ma|\le Cn^{2/5}$, and for each $(a,b)\in \ma$, $a> a_0/2$ and $b-a > c_0/2$. Let us choose such a collection $\ma$. Then by Lemma \ref{zab}, for each $(a,b)\in \ma$,
\begin{align}\label{zabd}
Z_{a,b}' \le e^{nL(a,b) + Cn^{4/5+\ep}}.
\end{align}
Now, if $f\not \in V_{a_0}$ and $f\not \in W_{c_0}$, then $a(f)> a_0$ and $b(f)-a(f) > c_0$. Therefore by \eqref{va0}, \eqref{wc0} and \eqref{zabd}, it follows that if $n > C$, 
\begin{align*}
Z' &\le \int_{V_{a_0}\cup W_{c_0}} e^{\beta n^{-1}S_4(f)} df + \sum_{(a,b)\in \ma} Z_{a,b}'\\
&\le 2e^{n(L(a^*,B) - \delta)} + |\ma| \max_{(a,b)\in \ma} e^{n(L(a,b) + Cn^{-1/5+\ep})}\\
&\le Cn^{2/5} e^{n(L(a^*,B) + Cn^{-1/5+\ep})}. 
\end{align*}
A combination of the above inequality, \eqref{low1}, \eqref{zz} and Lemma \ref{ftheta} proves the conclusion of Theorem \ref{zthm} when $\beta B^2 > \theta_c$.

\subsection{Subcritical Partition Function}
Next, consider the case $\beta B^2 \le \theta_c$, when $L(a,b)$ is maximized at $(0,B)$. (The point of maximum is not unique when $\beta B^2 = \theta_c$, but that will not matter in the proof.) Taking $a= n^{-1/5}$ and $b = B/(1+n^{-1/5})$ in Lemma~\ref{zlow}, it follows that 
\begin{align*}
\frac{\log Z}{n} \ge L(n^{-1/5}, B/(1+n^{-1/5})) - Cn^{-1/5} - \frac{C}{nh^2} - \frac{C\log n}{n}. 
\end{align*}
Since $L$ has bounded derivatives in a neighborhood of $(0,B)$, this shows that if $n > C$, 
\begin{align}\label{low2}
\frac{\log Z}{n} \ge L(0,B) - Cn^{-1/5} - \frac{C}{nh^2}. 
\end{align}
By Lemma \ref{wc}, a constant $c_1= c_1(\ep, \beta, B) > 0$ can be chosen so small that when $n > C$, 
\begin{align}\label{wc1}
\int_{W_{c_1}} e^{\beta n^{-1} S_4(f)} df \le e^{\frac{1}{2}n L(0,B)}. 
\end{align}
Lemma \ref{zab} shows that  whenever $n^{-2\ep/5} < a<b \le B$ and  $b-a > c_1$, 
\begin{align}\label{zablow}
\frac{\log Z_{a,b}'}{n} \le L(a,b) + Cn^{-1/5+\ep}\le L(0, B) + Cn^{-1/5+\ep}. 
\end{align}
Let $a_1 := n^{-2\ep/5}$. Let $\ma$ be a set of $(a,b)$ such that 
\[
\bigcup_{(a,b)\in \ma} \Gamma_{a,b} \supseteq \{f: b(f)\le B, \, b(f)-a(f)>c_1, \, a(f) > 2a_1\}. 
\]
From the definition of $\Gamma_{a,b}$, it should be clear that $\ma$ can be chosen such that $|\ma|\le n^{C}$, 
and for each $(a,b)\in \ma$, $a> a_1$ and $b-a > c_1/2$. Therefore, \eqref{zablow} holds for every $(a,b)\in \ma$. Thus, from \eqref{wc1},  \eqref{zablow} and Lemma \ref{va} we get
\begin{align*}
Z' &\le \int_{V_{2a_1} \cup W_{c_1}} e^{\beta n^{-1}S_4(f)} df + \sum_{(a,b)\in \ma} Z'_{a,b}\\
&\le e^{4\beta a_1^2 n + L(0,B) n} + e^{\frac{1}{2}L(0,B) n} + |\ma| e^{L(0,B) n + C n^{4/5+\ep}}. 
\end{align*}
Since $|\ma|\le n^{C}$ and $a_1^2 n = n^{1-4\ep/5}$, this completes the proof in the case $\beta B^2 \le \theta_c$.

\section{Proof of Theorem \ref{phase} (Gibbs Measure)}\label{phaseproof}
Recall that $ h = n^{-p}$ in this theorem, where $p\in (0, 1/2)$.  In this proof, whenever we say ``for all $(a,b)$ satisfying...'' it will mean ``for all $(a,b)$ such that $0\le a< b\le B$, satisfying...''.

\subsection{Supercritical Gibbs Measure}
First, consider the case $\beta B^2 > \theta_c$.  Choose $q$ such that $\max\{2p, 4/5\} < q < 1$. Choose $\ep$ satisfying $4/5 +\ep = q$. Note that $\ep\in (0,1/5)$, as required. Let $r := (1+q)/2$.  Let $a^*$, $a_0$, $c_0$, $\delta$, $a(f)$ and $b(f)$ be as in the proof of Theorem \ref{zthm}. Let 
\begin{align*}
A_1 &:= \{f: b(f) < B - 2n^{-(1-r)}\}\cap V_{a_0}^c \cap W_{c_0}^c, \\
A_2 &:= \{f: |a(f)-a^*| > 2n^{-(1-r)/2}\}\cap V_{a_0}^c \cap W_{c_0}^c.
\end{align*}
Let $\ma_1$ be a collection of $(a,b)$ such that $0\le a<b\le B$ and 
\begin{align*}
\bigcup_{(a,b)\in \ma_1} \Gamma_{a,b} \supseteq A_1\cup A_2. 
\end{align*}
Clearly, $\ma_1$ can be chosen such that $|\ma_1|\le Cn^{2/5}$, and for all $(a,b)\in \ma_1$, $a > a_0/2$, $b-a> c_0/2$ and either $b< B - n^{-(1-r)}$ or $|a-a^*| > n^{-(1-r)/2}$ (or both). With these properties of $\ma_1$, two conclusions can be drawn. First, by Lemma \ref{zab},  one can conclude that for each $(a,b)\in \ma_1$,
\begin{align}\label{zab3}
Z_{a,b}' \le e^{nL(a,b) + Cn^{4/5+\ep}}. 
\end{align}
Next, observe that by equation \eqref{fl} and Lemma \ref{ftheta}, at the point $(a,b)= (a^*, B)$,  $\partial L / \partial a = 0$ and $\partial^2 L/ \partial a^2 < 0$. Also observe that at  all points $(a,B)$ such that $B -a > c_0/2$, $\partial L/ \partial b$ is uniformly bounded away from zero and positive. Moreover the derivatives of $L$  are continuous, $L$ is increasing in $b$, and $L(a,b)$ is uniquely maximized at $(a^*, B)$. 

Combining all of this, it follows that if $n > C$, then for all $(a,b)$ such that $|a-a^*|> n^{-(1-r)/2}$ and $b-a > c_0/2$, 
\[
L(a,b) \le L(a, B) \le L(a^*, B) - \frac{n^{-(1-r)}}{C}, 
\]
and for all $(a,b)$ such that $b < B- n^{-(1-r)}$ and $b-a > c_0/2$, 
\[
L(a,b) \le L(a,B) - \frac{n^{-(1-r)}}{C}\le L(a^*, B) - \frac{n^{-(1-r)}}{C}. 
\]
Thus, if $n > C$, then for all $(a,b)\in \ma_1$,
\[
L(a,b) \le L(a^*, B) - \frac{n^{-(1-r)}}{C}. 
\]
From the above display, inequality \eqref{zab3}, and the fact that $4/5+\ep < r$, it follows that if $n > C$, then for all $(a,b)\in \ma_1$,
\begin{align}\label{zab4}
Z_{a,b}' &\le e^{nL(a^*,B) - n^r/C}. 
\end{align}
Let $A := V_{a_0} \cup W_{c_0} \cup A_1\cup A_2$. 
Then by \eqref{zab4}, \eqref{va0}, \eqref{wc0}, \eqref{low1}, \eqref{zz} and the observation that $r > \max\{4/5+\ep, 2p\}$, we get that if $n > C$,
\begin{align*}
\pp(\psi\in A) &\le Z^{-1}\biggl( \int_{V_{a_0}\cup W_{c_0}} e^{\beta n^{-1}S_4(f)} df + |\ma_1|\max_{(a,b)\in \ma_1} Z_{a,b}'\biggr)\\
&\le e^{-nL(a^*, B)+ Cn^{4/5} + Cn^{2p}} \bigl(2e^{n(L(a^*, B) -\delta)}  + Cn^{2/5} e^{nL(a^*,B) - n^r/C}\bigr)\\
&\le e^{-n^r/C}. 
\end{align*}
Thus, with $A' := \{f: b(f)\le B, \, f\not\in A\}$, 
we have that for $n > C$,
\begin{equation}\label{ap}
\pp(\psi\in A') \ge 1 - e^{-n^r/C}. 
\end{equation}
Note that if $f\in A'$, then 
\begin{equation}\label{bfbd}
|n^{-1}N(f)-B|= |b(f)-B|\le 2n^{-(1-r)}
\end{equation}
and 
\begin{equation}\label{afbd}
\begin{split}
|n^{-1}H(f)+{a^*}^2| &\le Cn^{-(1-2p)} + |n^{-2}S_4(f)-{a^*}^2| \\
&\le Cn^{-(1-2p)} + |a(f)-a^*||a(f)+a^*|\\
&\le Cn^{-(1-2p)} + 2B|a(f)-a^*| \\
&\le Cn^{-(1-r)/2}. 
\end{split}
\end{equation}
Let $\ma_2$ be a collection of $(a,b)$ such that 
\begin{equation}\label{agamma}
\bigcup_{(a,b)\in \ma_2} \Gamma_{a,b} \supseteq A'. 
\end{equation}
From the definition of $A'$, it should be clear that if $n > C$, then  $\ma_2$ can be chosen such that $|\ma_2|\le Cn^{2/5}$, and for each $(a,b)\in \ma_2$, $b-a > c_0/2$, $a > a_0/2$, 
\begin{equation}\label{bbd}
|b-B|\le 4n^{-(1-r)},
\end{equation}
and 
\begin{equation}\label{abd}
|a-a^*| \le 4n^{-(1-r)/2}
\end{equation}
Take any $(a,b)\in \ma_2$. Let $\Gamma_{a,b}'$ and $\phi$ be defined as in the proof of Lemma~\ref{zab}. Then from \eqref{claim1}, \eqref{a1} and \eqref{a2},   it follows that if $n > C$, 
\begin{align*}
\pp(\phi\in \Gamma_{a,b}\backslash \Gamma_{a,b}') \le \exp\biggr(-\frac{an}{b-a} - \frac{n^{4/5+\ep}}{C}\biggr). 
\end{align*}
(The exponent $(4+3\ep)/5$ in \eqref{a2} improves to $4/5+\ep$ since $a > a_0/2$ here. This is easy to verify in the derivation of \eqref{a2}.) 
Therefore, as in \eqref{vol2}, and using the fact that $b-a > c_0/2$, 
\begin{align*}
\vol(\Gamma_{a,b}\backslash \Gamma_{a,b}') &\le \exp\Big(\frac{bn}{b-a}(1+n^{-1/5}) + n\log \pi(b-a) \Big) \pp(\phi\in \Gamma_{a,b}\backslash\Gamma_{a,b}')\\
&\le e^{n (L(a,b) -\beta a^2)- n^{4/5+\ep}/C}. 
\end{align*}
Thus, by \eqref{low1} and \eqref{zz} and the observation  that $4/5+\ep =q > 2p$, 
\begin{align}
\pp\bigl(\psi\in \Gamma_{a,b}\backslash \Gamma_{a,b}'\bigr) &\le Z^{-1} \int_{\Gamma_{a,b}\backslash \Gamma_{a,b}'} e^{\beta n^{-1} S_4(f)} df \nonumber \\
&\le Z^{-1} e^{\beta n a^2(1+n^{-1/5})} \vol(\Gamma_{a,b}\backslash \Gamma_{a,b}')\nonumber\\
&\le e^{ - n^{4/5+\ep}/C}. \label{ma2}
\end{align}
Let
\[
Q :=  \bigcup_{(a,b)\in \ma_2} \Gamma_{a,b}'.
\]
By \eqref{agamma}, 
\[
A'\subseteq Q\cup \biggl(\bigcup_{(a,b)\in \ma_2} (\Gamma_{a,b}\backslash\Gamma_{a,b}')\biggr).
\]
Therefore by the fact that $|\ma_2|\le Cn^{2/5}$ and \eqref{ma2},
\begin{equation}\label{aq}
\begin{split}
\pp(\psi\in A') &\le \pp(\psi \in Q) + |\ma_2|\max_{(a,b)\in \ma_2} \pp(\psi\in  \Gamma_{a,b}\backslash \Gamma_{a,b}')\\
&\le \pp(\psi\in Q) + Cn^{2/5}e^{-n^{4/5+\ep}/C}. 
\end{split}
\end{equation}
By \eqref{aq} and \eqref{ap} we see that for $n > C$,  
\begin{align*}
\pp(\psi\in A'\cap Q) &\ge \pp(\psi\in A') + \pp(\psi\in Q) - 1\\
&\ge 2\pp(\psi\in A') - e^{-n^q/C} - 1\\
&\ge 1- e^{-n^r/C} - e^{-n^q/C} \ge 1-e^{-n^q/C}.
\end{align*}
Recall that $M_1(f)$ and $M_2(f)$ denote the largest and second largest components of the vector $(|f_x|^2)_{x\in V}$. Note that if $f\in Q$, then $f\in \Gamma_{a,b}'$ for some $(a,b)\in \ma_2$. The definition of $\Gamma_{a,b}'$ implies that 
\[
|M_1(f) -an|\le Cn^{4/5+\ep}, \ \ M_2(f) \le C n^{4/5+\ep}. 
\]
Combining this with \eqref{abd} and the fact that $4/5+\ep = q$ gives
\[
|M_1(f) - a^*n| \le Cn^{(3+q)/4}. 
\]
The last two displays and the inequalities \eqref{bfbd} and \eqref{afbd} complete the proof of the theorem in the case $\beta B^2 > \theta_c$.

\subsection{Subcritical Gibbs Measure}
Next, consider the case $\beta B^2 < \theta_c$. Let $q$ satisfy 
\[
\max\{(1+2p)/2, 17/18\}<q<1.
\]
Choose $\ep$ such that 
\[
1-\frac{2\ep}{5} = q.  
\]
Since $q> 17/18$, it follows that $\ep < 1/7$ and therefore $1-2\ep/5> 4/5+\ep$, a fact that will be needed below. 

If $M_1(\psi) > n^{1-2\ep/5}$, then $S_4(\psi)> n^{2-4\ep/5}$ and therefore $\psi\in \Gamma_{a, b}$ for some $n^{-2\ep/5}<a< b\le B$. Let $c_1$ be as in \eqref{wc1}. Then combining Lemma \ref{zab}, \eqref{low2} and the facts that $L$ is uniquely maximized at $(0,B)$ and $\partial L /\partial a < 0$ at $(0,B)$, we see that if $n > C$, then for any $(a,b)$ satisfying $2n^{-2\ep/5}<a< b\le B$ and $b-a>c_1/2$,
\begin{align*}
\pp(\psi\in \Gamma_{a, b}) &\le Z^{-1} Z_{a,b}'\\
&\le e^{-nL(0,B)+ Cn^{4/5} + C n^{2p} + n L(a,b)+ Cn^{4/5+\ep}}\\
&\le e^{-nL(0,B)+ Cn^{4/5} + C n^{2p} + n L(a,B)+ Cn^{4/5+\ep}}\\
&\le e^{-C^{-1}n^{1-2\ep/5} + Cn^{4/5+\ep} + Cn^{2p}}. 
\end{align*}
By our choice of $\ep$, it follows that for any such $(a,b)$, if $n >C$ then 
\begin{equation}\label{ma3}
\pp(\psi\in \Gamma_{a, b}) \le e^{-n^q/C}. 
\end{equation}
As usual, we can choose a set $\ma_3$ of size $\le n^{C}$ such that 
\[
\bigcup_{(a,b)\in \ma_3} \Gamma_{a,b} \supseteq \{(a,b): 2n^{-2\ep/5}<a< b\le B, b-a>c_1\}.
\]
Moreover, we can ensure that $b-a> c_1/2$ and $a > n^{-2\ep/5}$ for all $(a,b)\in \ma_3$. Therefore by \eqref{ma3}, \eqref{low2} and our choice of $c_1$, we see that if $n>C$, then 
\begin{align}
\pp(M_1(\psi) > n^{1-2\ep/5}) &\le |\ma_3|\max_{(a,b)\in \ma_3} \pp(\psi\in \Gamma_{a,b}) + \pp(\psi\in W_{c_1}) \nonumber \\
&\le e^{-n^q/C} + Z^{-1} e^{\frac{1}{2}n L(0,B)}\nonumber\\
&\le e^{-n^q/C}.\label{m1psi} 
\end{align}
Define the sets 
\begin{align*}
E_1 &:= \{f: S_2(f) > Bn - n^{q}\},\\
E_2 &:= \{f: |H(f)|\le 2n^{2q-1}\},\\
E_3 &:= \{f: M_1(f) \le n^q\}.
\end{align*}
First, observe that if $n > C$, then $E_3 \subseteq E_2$: if $M_1(f) \le  n^{1-2\ep/5} = n^{q}$ and $n > C$, then
\begin{align*}
|H(f)| & \le Cn^{2p} + n^{-1}M_1(f)^2 \\
& \le Cn^{2p} + n^{2q-1} \\
& \le 2n^{2q-1},
\end{align*}
since $2q-1 > 2p$. 
In particular, if $n > C$, 
\begin{equation*}\label{hpsi}
\pp(\psi \in E_2) = \pp(|H(\psi)|\le 2n^{2q-1}) \ge 1-e^{-n^q/C}. 
\end{equation*}
Next, observe that by \eqref{low2} and Lemma \ref{ball}, the probability of $\psi$ belonging to $E^c_1 \cap E_2$ is:
\begin{align*}
& \pp(S_2(\psi) \le Bn - n^{q}, \ |H(\psi)|\le 2n^{2q-1}) \\
&= Z^{-1}  \int_{\{f:\,  S_2(f) \le Bn - n^{q}, \ |H(f)|\le 2n^{2q-1}\}}e^{-\beta H(f)} 1_{\{S_2(f)\le Bn\}} df \\
&\le Z^{-1}e^{ 2 \beta n^{2q-1}}\vol(U_{n, Bn-n^q}) \\
&\le \exp\Big(-n(L(0,B) -L(0, B- n^{-(1-q)})) + Cn^{4/5} + Cn^{2p}+  Cn^{2q-1}\Big)\\
&\le e^{-n^q/C + Cn^{4/5}+ Cn^{2p}+Cn^{2q-1}} \le e^{-n^q/C}.
\end{align*}
Combining the last two displays with \eqref{m1psi} gives that for $n > C$, 
\begin{align*}
\pp(\psi\in E_1\cap E_2 \cap E_3) &=  \pp(\psi\in E_2\cap E_3) - \pp(\psi\in E_1^c \cap E_2\cap E_3)\\
&= \pp(\psi\in E_3) - \pp(\psi\in E_1^c \cap E_2 \cap E_3)\\
&\ge 1-e^{-n^q/C} - \pp(\psi \in E_1^c \cap E_2)\\
&\ge 1-e^{-n^q/C}.
\end{align*}
This completes the proof in the case $\beta B^2 < \theta_c$. 

\subsection{Critical Gibbs Measure}
Finally, when $\beta B^2 = \theta_c$, $L(a,b)$ is maximized at exactly two points: $(0,B)$ and $(a^*, B)$. As before, this implies that with high probability, $\psi$ cannot belong to any $\Gamma_{a,b}$ where $(a,b)$ is away from both of these optimal points. It can be deduced from this, exactly as before, that with high probability either \eqref{phase1} or \eqref{phase2} must hold. We omit the details.
This completes the proof of Theorem \ref{phase}.

\section{Proof of Theorem \ref{blowup} (Blow-up of $H^1$ norm)}\label{blowupproof}
The conclusion for the supercritical case $\beta B^2 > \theta_c$ is almost immediate from \eqref{phase1}. To see this, note that if $x$ is the mode, then $\sum_{y\sim x}|\psi_x-\psi_y|^2 \ge an - o(n)$ with high probability.
 
Consider the subcritical case $\beta B^2 < \theta_c$. Define $E_2:= \{f: |H(f)|\le 2n^{2q-1}\}$. Then by the lower bound on $Z$ from Theorem \ref{zthm} and the subcritical part of Theorem~\ref{phase}, if $n > C$, then for any $A\subseteq \cc^V$, 
\begin{align*}
\pp(\psi \in A) &\le \pp(\psi\not \in E_2) + \pp(\psi\in A\cap E_2)\\
&\le   e^{-n^q/C} + Z^{-1}\int_{A\cap E_2} e^{-\beta H(f)} 1_{\{S_2(f) \le Bn\}} df\\
&\le e^{-n^q/C} + (B\pi e)^{-n} e^{Cn^{4/5} + C n^{2p}+ Cn^{2q-1}}\int_{A\cap E_2} 1_{\{S_2(f)\le Bn\}} df.
\end{align*}
Observe that if $\phi$ is a random vector whose components are i.i.d.\ standard complex Gaussian, then for any $U\subseteq \cc^V$, 
\begin{align*}
(B\pi e)^{-n}\int_U 1_{\{S_2(f)\le Bn\}} df &\le (B\pi e)^{-n}\int_U e^{n -S_2(f)/B}1_{\{S_2(f)\le Bn\}} df \\
&\le (B\pi)^{-n}\int_U e^{-S_2(f)/B} df\\
&= \pp(\sqrt{B} \phi \in U). 
\end{align*}
Since the graph $G$ has maximum degree $D$, it is easy to see (e.g.,\ by a greedy algorithm) that there is a subset of edges $E'\subseteq E$ such that $|E'| \ge |E|/D$ and any two edges in $E'$ are vertex disjoint. Therefore, for a sufficiently large $C$ (depending on the usual parameters), using the fact that $|\phi_x-\phi_y|^2$ is an exponential random variable with mean 2,
\begin{align*}
\ee(e^{-\sum_{(x,y)\in E} |\phi_x-\phi_y|^2}) &\le \ee(e^{-\sum_{(x,y)\in E'} |\phi_x-\phi_y|^2}) \\
&=\prod_{(x,y)\in E'} \ee(e^{- |\phi_x-\phi_y|^2}) = 3^{-|E'|}\le e^{-|E|/C}.
\end{align*}
Thus, there is a constant $C$ sufficiently large such that 
\begin{align*}
\pp\biggl(\sum_{(x,y)\in E} |\phi_x-\phi_y|^2\le |E|/C\biggr) \le e^{-|E|/C}.
\end{align*}
Combining this with the two earlier observations and the observation that $\delta = 2|E|/n$ completes the proof in the case $\beta B^2 < \theta_c$.

Finally, for the critical case, when $\beta B^2 = \theta_c$, either \eqref{phase1} or \eqref{phase2} must hold by Theorem \ref{phase}. If~\eqref{phase1} holds, the result is trivial.  For functions satisfying \eqref{phase2}, an argument similar to the above has to be made with small modifications. (For example, instead of a single set $E_2$, we have to consider two sets $E_2$ and $E_2'$ such that $E_2\cup E_2'$ has high probability.) We omit the details.

\section{Proof of Theorem \ref{expo} (Standing Wave)}\label{expoproof}
As usual $C$ denotes any constant that depends only on $\beta$, $B$, $D$, $p$ and $q$. Let $C_0$ denote the constant $C$ from the first part of Theorem \ref{phase}, so as not to confuse it with other $C$'s in this proof. Let $A$ denote the interval $[0,e^{n^q/2C_0}]$. Define 
\[
T := \{t\in A: \psi(t) \text{ violates  \eqref{phase1} for  $C=C_0$}\}.
\]
Then by Theorem \ref{phase} and the invariance of the flow, if $n>C_0$, 
\[
\ee|T|  = \int_{t\in A} \pp(t\in T) dt \le e^{-n^q/C_0}e^{n^q/2C_0} = e^{-n^q/2C_0},
\]
where $|T|$ denote the Lebesgue measure of the set $T$. Therefore,
\[
\pp(|T|\ge  e^{-n^q/4C_0}) \le \frac{\ee|T|}{e^{-n^q/4C_0}} \le e^{-n^q/4C_0}. 
\]
Also by Theorem \ref{phase}, $\pp(0\in T) \le e^{-n^q/C_0}$. Thus, 
\[
\pp(|T| < e^{-n^q/4C_0} \ \text{and} \ 0\not\in T) \ge 1 - e^{-n^q/4C_0} - e^{-n^q/C_0}. 
\]
Suppose this event happens, that is, $|T|< e^{-n^q/4C_0}$ and $0\not \in T$. We claim that under this circumstance, the discrete wavefunction $\psi$ cannot have modes at two different locations for two distinct times $t,s\in A$ and moreover $\psi(t)$ satisfies \eqref{phase1} for all $t\in A$. This would complete the proof.

By \eqref{phase1}, we know that if $t\not \in T$, then for each $y\in V$, $|\psi_y(t)|^2$ must either belong to the interval $[an - C_0 n^{(3+q)/4}, an + C_0 n^{(3+q)/4}]$ (such a point will be called a type I point) or is $\le C_0n^q$ (such a point will be called a type II point). Say that a point $y$ is type III at time $t$ if $2C_0 n^q \le |\psi_y(t)|^2 \le an - 2C_0 n^{(3+q)/4}$, and type IV if $|\psi_y(t)|^2 \ge an + 2C_0 n^{(3+q)/4}$.  Note that if $t\not \in T$, there cannot be any type III or type IV points.

Let $x$ be the mode at time $0$. Since $0\not \in T$,  $x$ is a type I point at time $0$. Suppose $x$ is a type III point at some time in  $A$; let $s$ be the first such time (there is a first time by the time-continuity of the wavefunction). Let $s'$ be the last time before $s$ such that $x$ was type I at time~$s'$. Then by continuity, 
\[
an-2C_0 n^{(3+q)/4} < |\psi_x(u)|^2 < an - C_0 n^{(3+q)/4} \ \text{ for all } \ u\in (s',s).
\]
Thus, the interval $(s',s)$ is a subset of $T$. Since $|T|\le e^{-n^q/4C_0}$, this implies that $s-s' \le e^{-n^q/4C_0}$. Moreover, $|\psi_x(s)|^2 \le an-2C_0 n^{(3+q)/4}$ and $|\psi_x(s')|^2 \ge  an - C_0 n^{(3+q)/4}$. Thus, there exists $u\in (s',s)$ such that
\begin{equation}\label{large}
\frac{d}{dt}|\psi_x(t)|^2 \biggr|_{t=u} \le -\frac{C_0 n^{(3+q)/4}}{e^{-n^q/4C_0}}. 
\end{equation}
On the other hand by \eqref{DNLSlam}, for any $y\in V$ and at any $t$,
\begin{align*}
\frac{d}{dt}|\psi_y(t)|^2 &= \psi_y(t) \frac{d}{dt} \overline{\psi_y(t)} + \overline{\psi_y(t)}\frac{d}{dt}\psi_y(t) \\
&= 2\mathrm{Re}\biggl( \overline{\psi_y(t)}\frac{d}{dt}\psi_y(t) \biggr)\\
&= 2\mathrm{Re}\Big(\overline{\psi_y(t)}\bigl(i\widetilde{\Delta} \psi_y(t) + i|\psi_y(t)|^2 \psi_y(t)\bigr)  \Big).
\end{align*}
Note that $N(\psi(t))$ is the same for all $t$ and is bounded by $n^C$ at $t=0$ (and hence for all $t$) by the assumption that $0\not \in T$. Since $|\psi_y(t)|^2 \le N(\psi(t))$ for any $y$ and $t$ and $|2\mathrm{Re}(u)|\le 2|u|$, this implies  that for any $y$ and $t$,
\begin{align*}
\biggl|\frac{d}{dt}|\psi_y(t)|^2 \biggr| &\le \frac{2|\psi_y(t)|}{h^2} \sum_{z\sim y}(|\psi_z(t)| + |\psi_y(t)|) + |\psi_y(t)|^4\\
&\le 4n^{2p}DN(\psi(t))+ N(\psi(t))^2 \le n^C. 
\end{align*}
But this is in contradiction with \eqref{large} if $n$ is sufficiently large. The contradiction is to the assumption that $x$ is type III at some time in $A$. Thus, $x$ cannot be type III at any time in $A$. 

Similar arguments show that if $|T|< e^{-n^q/4C_0}$ and $0\not \in T$, and $x$ is the mode at time $0$, then $x$ cannot be type IV any time  in $A$, and for any $y\ne x$, $y$ cannot be a type III point at any time in $A$. 

These three deductions combined with continuity of the flow and the conservation of mass and energy establish that $x$ must be the mode at all times in $A$ and moreover \eqref{phase1} must hold for $\psi(t)$ for all $t\in A$ (with a $C$ different than $C_0$), thus completing the proof of the theorem.

\section{Proof of Theorem \ref{distribution} (Limiting Distribution)}\label{distproof}
Fix $k$ distinct elements $x_1,\ldots, x_k\in V$. Define an undirected graph structure on $\Sigma$ as follows: for any two distinct $\sigma, \tau\in \Sigma$, say that $(\sigma, \tau)$ is an edge~if 
\[
\{\sigma x_1,\ldots,\sigma x_k\}\cap \{\tau x_1,\ldots, \tau x_k\} \ne \emptyset. 
\]
Since $\Sigma$ is a group and Assumption 2 of translatability holds, therefore for each $\sigma$ and each  $1\le i\le k$, there are at most $k$ automorphisms in $\Sigma$ that take $\sigma x_i$ into the set $\{x_1,\ldots, x_k\}$. In other words, there are at most $k$ possible $\tau\in \Sigma$ such that $\tau^{-1}\sigma x_i \in \{x_1,\ldots, x_k\}$. This shows that the maximum degree of the graph on $\Sigma$ is bounded by $k^2$. Consequently, there is an independent subset $\Sigma'$ of $\Sigma$ of size $\ge n/k^2$. 

Let $\phi = (\phi_x)_{x\in V}$ be a vector of i.i.d.\ standard complex Gaussian random variables. Fix   a Borel set $U\subseteq \cc^k$, and let 
\begin{align*}
Q(\phi) := \frac{ |\{\sigma\in \Sigma':(\phi_{\sigma x_1},\ldots, \phi_{\sigma x_k})\in U\}|}{|\Sigma'|}. 
\end{align*}
Since $\Sigma'$ is a set of automorphisms and the Hamiltonian $H$ is invariant under graph automorphisms,
\[
\ee Q(\phi) = \pp((\phi_{x_1},\ldots,\phi_{x_k})\in U).
\]
By the design of $\Sigma'$, $Q(\phi)$ is an average of independent random variables taking value in $\{0,1\}$; consequently, by Hoeffding's inequality \cite{hoeffding63}, for any $\ep\ge 0$, 
\begin{align*}
\pp(|Q(\phi) - \ee Q(\phi)| > \ep)\le2 e^{-|\Sigma'|\ep^2/2}\le 2e^{-n\ep^2/2k^2}. 
\end{align*}

\subsection{Subcritical Limiting Distribution}
If $\beta B^2 <\theta_c$, then as in the proof of Theorem \ref{blowup}, for any $\ep >0$, 
\begin{align*}
&\pp(|Q(B^{-1/2}\psi) - \ee Q(\phi)|> \ep )\\
&\le e^{-n^q/C} + e^{Cn^{4/5} + Cn^{2p}+ Cn^{2q}} \pp(|Q(\phi) - \ee Q(\phi)| \ge \ep) \\
&\le e^{-n^q/C} + 2e^{Cn^{4/5} + Cn^{2p}+ Cn^{2q}- n\ep^2/2k^2}. 
\end{align*}
A simple computation using the above bound and the identity $$\ee(X) = \int_0^\infty \pp(X> t)dt$$ that holds for any non-negative random variable $X$, shows that there is a constant $C > 0$ depending only on the usual model parameters such that if $n > C$, then 
\[
\ee|Q(B^{-1/2}\psi) - \ee Q(\phi)| \le kn^{-1/C}. 
\]
By Jensen's inequality, this gives 
\begin{align*}
&\ee|Q(B^{-1/2}\psi) - \ee Q(\phi)| \ge |\ee Q(B^{-1/2}\psi) - \ee Q(\phi)|\\
&= \bigl|\pp(B^{-1/2}(\psi_{x_1},\ldots, \psi_{x_k}) \in U) - \pp((\phi_{x_1},\ldots, \phi_{x_k})\in U)\bigr|.
\end{align*}
This completes the proof in the case $\beta B^2 <  \theta_c$. 

\subsection{Supercritical Limiting Distribution}
Next, suppose $\beta B^2 > \theta_c$. Take any $q$ as in the first part of Theorem \ref{phase}, and let $C_0$ denote the constant $C$. Let $R$ denote the set of functions satisfying \eqref{phase1} with $C=C_0$. For each $x\in V$, let $R_x$ be the subset of $R$ consisting of functions with mode at $x$. Note that the $R_x$'s are disjoint and $R=\bigcup_x R_x$. Let $a = a(\beta, B)$  be defined as in \eqref{adef}. Recall that 
\[
F(\beta, B) = \beta a^2 + \log ((B-a)\pi e).
\]
Fix $\ep > 0$ and let 
\[
A := \{f: |Q((B-a)^{-1/2}f) - \ee Q(\phi)| > \ep\}. 
\]
Then for $n> C_0$, 
\begin{align*}
&\pp(\psi \in A) \le \pp(\psi\not \in R) + \pp(\psi\in A\cap R)\\
&\le   e^{-n^q/C} + Z^{-1}\int_{A\cap R} e^{-\beta H(f)} 1_{\{S_2(f) \le Bn\}} df\\
&\le e^{-n^q/C} + e^{-n\beta a^2}((B-a)\pi e)^{-n} e^{Cn^{4/5} + C n^{2p}}\int_{A\cap R} e^{n\beta a^2 + Cn^{(3+q)/4}} df\\
&\le e^{-n^q/C} + ((B-a)\pi e)^{-n} e^{Cn^{4/5} + C n^{2p} + Cn^{(3+q)/4}}\sum_{x\in V}\vol(A\cap R_x).
\end{align*}
Fix $x\in V$. For each $f$, let  $f'$ be the function such that $f'_y= f_y$ for all $y\ne x$ and $f'_x=n^{-1/2}f_x$. Note that the map $f\mapsto f'$ is linear with determinant~$n^{-1}$ (and not $n^{-1/2}$, since we are contracting both the real and imaginary parts of $f_x$). 

There are at most $k$ many $\sigma\in \Sigma$ such that $\sigma^{-1} x \in  \{x_1,\ldots, x_k\}$. Therefore for any $f$,
\begin{align*}
|Q((B-a)^{-1/2}f)- Q((B-a)^{-1/2}f')| &\le \frac{k}{|\Sigma'|}\le \frac{k^3}{n}. 
\end{align*}
Consequently, if $f\in A$ then $f'\in A'$ where 
\[
A' := \{f: |Q((B-a)^{-1/2}f) - \ee Q(\phi)| > \ep - k^3/n\}.
\]
Again if $f\in R_x$, then $f'\in R'$ where 
\[
R':= \{ f: S_2(f)\le (B-a)n + Cn^{(3+q)/4}\}.
\] 
Combining the above observations gives 
\begin{align*}
\vol(A\cap R_x) &= \int 1_{\{f\in A\cap R_x\}} df\\
&\le n \int 1_{\{f'\in A'\cap R'\}} df'\\
&\le n e^{Cn^{(3+q)/4} + n} \int_{A'} e^{-S_2(f')/(B-a)} df'\\
&= n ((B-a)\pi e)^n e^{Cn^{(3+q)/4}} \pp(|Q(\phi)-\ee Q(\phi)|> \ep - k^3/n).
\end{align*}
The proof is now completed as before.

\section{Acknowledgments}
The authors thank Riccardo Adami, Jose Blanchet, Jeremy Marzuola, Stefano Olla, Jalal Shatah and Terence Tao for enlightening comments. SC thanks Persi Diaconis and Julien Barr\'e for bringing the problem to his attention.

\bibliographystyle{amsalpha} 

\thispagestyle{plain}

\end{document}